\newtheorem{theorem}{Theorem}[section]
\newtheorem*{theorem*}{Theorem}
\newtheorem{lemma}[theorem]{Lemma}
\newtheorem{proposition}[theorem]{Proposition}
\theoremstyle{definition}
\newtheorem{definition}[theorem]{Definition}
\newtheorem{problem}[theorem]{Problem}
\newcommand{\R}{\mathbb{R}}
\def\Ric{\text{Ric}}
\def\R{\mathbb{R}}
\def\Ric{\operatorname{Ric}}
\numberwithin{equation}{section}
\newcommand*\owedge{\mathpalette\@owedge\relax}
\newcommand*\@owedge[1]{%
  \mathbin{%
    \ooalign{%
      $#1\m@th\bigcirc$\cr
      \hidewidth$#1\m@th\wedge$\hidewidth\cr
    }%
  }%
}
\begin{document}

\title[Sectional Curvature and Isotropic Curvature]{Sectional Curvature, Isotropic Curvature, and Yau's Pinching Problem}

\author[Xiaolong Li]{Xiaolong Li}\thanks{The author's research is partially supported by NSF-DMS \#2405257 and NSF-DMS \#2553660, and a start-up grant at Auburn University}
\address{Department of Mathematics and Statistics, Auburn University, Auburn, AL, 36849, USA}
\email{xil0005@auburn.edu}

\subjclass[2020]{53C20, 53C21}

\keywords{Sphere theorems, curvature pinching, positive isotropic curvature, Yau's pinching problem}

\begin{abstract}
We prove that if a closed Riemannian manifold $(M^n,g)$ has finite fundamental group and satisfies the curvature condition
\begin{equation*}
    R_{1313} +R_{1414} +R_{2323} + R_{2424} > \tfrac{1}{2}\left(R_{1212} + R_{3434}\right)
\end{equation*}
for all orthonormal four-frame $\{e_1, e_2, e_3, e_4\} \subset T_pM$, then the universal cover of $M$ is homeomorphic to the $n$-sphere. This generalizes the famous sphere theorem under the stronger condition of $\frac{1}{4}$-pinched sectional curvature.  
As an application, we provide a partial answer to a pinching problem proposed by Yau in 1990.
\end{abstract}

\maketitle

\section{Introduction}

The celebrated sphere theorem due to Berger \cite{Berger60} and Klingenberg \cite{Klingenberg61} asserts that if $(M^n,g)$ is a complete, simply connected, $n$-dimensional Riemannian manifold whose sectional curvature $K$ satisfies
\begin{equation*}
    1 < K (\sigma) \leq 4
\end{equation*}
for all two-planes $\sigma \subset T_pM$, then $M$ is homeomorphic to the $n$-sphere. In 2009, Brendle and Schoen \cite{BS09} utilized the Ricci flow introduced by Hamilton \cite{Hamilton82}, together with the novel construction of invariant cones under Ricci flow by B\"ohm and Wilking \cite{BW08}, to upgrade the homeomorphism to diffeomorphism and weaken the pinching condition from global to pointwise, whereas the $n=4$ case was done much earlier by Chen \cite{Chen91} based on Hamilton's convergence result for Ricci flow on four-manifolds \cite{Hamilton86}. More precisely, it is proved that if a closed Riemannian manifold $(M^n,g)$ has $\frac{1}{4}$-pinched sectional curvature in the sense that 
\begin{equation*}
    K(\sigma_1) > \frac{1}{4} K(\sigma_2)
\end{equation*}
for all two-planes $\sigma_1, \sigma_2 \subset T_pM$, then $M$ is diffeomorphic to a spherical space form. Shortly after, the $\frac{1}{4}$-pinched sectional curvature condition was relaxed to $\frac{1}{4}$-pinched flag curvature, which means $K(\sigma_1) > \frac{1}{4} K(\sigma_2)$ for all two-planes $\sigma_1, \sigma_2 \subset T_pM$ that intersect in a line, by Andrews and Nguyen \cite{AN09} for $n=4$ and Ni and Wilking \cite{NW10} for $n\geq 5$. 
For more sphere theorems under various positivity conditions on curvature, we refer the reader to \cite{MM88}, \cite{BW08}, \cite{Brendle08}, \cite{CGT23}, \cite{Li24, Li22JGA, Li24New}, \cite{PW21}, \cite{NPW22}, and the references therein. 

The main purpose of this article is to prove the following sphere theorem under a much weaker pinching condition on the sectional curvature. 
\begin{theorem}\label{thm sphere main}
Let $(M^n,g)$ be a closed Riemannian manifold of dimension $n\geq 4$. Suppose that $(M,g)$ has finite fundamental group and satisfies 
\begin{equation}\label{eq main condition}
    R_{1313} +R_{1414} +R_{2323} + R_{2424} > \frac{1}{2}\left(R_{1212} + R_{3434}\right)
\end{equation}
for all orthonormal four-frame $\{e_1, e_2, e_3, e_4\} \subset T_pM$. Then the universal cover of $M$ is homeomorphic (diffeomorphic if $n=4$ or $n\geq 12$) to the $n$-sphere. 
\end{theorem}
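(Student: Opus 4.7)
The plan is to follow the Ricci flow strategy of Brendle--Schoen and its refinements for positive isotropic curvature (PIC) type conditions. The central goal is to show that \eqref{eq main condition} implies a curvature condition preserved under Ricci flow; the most natural candidate is PIC1, meaning that $(M,g) \times \mathbb{R}$ has positive isotropic curvature.

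First I would reformulate \eqref{eq main condition} in terms of the curvature operator $\mathcal{R}\colon \Lambda^2 T_pM \to \Lambda^2 T_pM$. For a $4$-plane $V \subset T_pM$ with an orthogonal splitting $V = A \oplus B$ into two $2$-planes, \eqref{eq main condition} is equivalent to
\[
   \tr\bigl(\mathcal{R}|_{A \otimes B}\bigr) \,>\, \tfrac{1}{2}\, \tr\bigl(\mathcal{R}|_{\Lambda^2 A \,\oplus\, \Lambda^2 B}\bigr),
\]
where $A \otimes B \subset \Lambda^2 V$ denotes the $4$-dimensional space of ``mixed'' $2$-forms. Thus the condition asserts that the average curvature on mixed $2$-planes exceeds that on pure $2$-planes across all such splittings, with the coefficient $\tfrac{1}{2}$ being precisely the ratio of the two subspace dimensions. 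This invariant rephrasing also exposes why \eqref{eq main condition} rules out $\mathbb{CP}^n$-like borderline cases: choosing $A$, $B$ to be complex lines saturates the inequality.

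The algebraic heart of the proof is the pointwise implication that \eqref{eq main condition} yields PIC1. Since the term $R_{1234}$ does not appear in \eqref{eq main condition}, it must be controlled indirectly by applying \eqref{eq main condition} to rotated orthonormal $4$-frames. Concretely, I would substitute frames of the form $\{e_1,\, \cos\theta\, e_2 + \sin\theta\, e_4,\, e_3,\, -\sin\theta\, e_2 + \cos\theta\, e_4\}$ into \eqref{eq main condition}, expand in $\theta$, and either integrate over $[0, 2\pi]$ or take second derivatives at $\theta = 0$ to produce new scalar inequalities involving $R_{1234}$ and $R_{1324}$. Combining these with the first Bianchi identity $R_{1234} + R_{1342} + R_{1423} = 0$ should yield a Berger-type bound sufficient to upgrade \eqref{eq main condition} to
\[
   R_{1313} + \lambda^2 R_{1414} + R_{2323} + \lambda^2 R_{2424} - 2\lambda R_{1234} \,>\, 0
\]
for every $\lambda \in [0,1]$, which is exactly PIC1.

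With PIC1 in hand, the theorem follows from the established sphere theorem for closed PIC1 manifolds with finite fundamental group: the normalized Ricci flow deforms $g$ to a metric of constant positive sectional curvature, so the universal cover (compact because $\pi_1(M)$ is finite) is homeomorphic to $S^n$ and $M$ itself is a spherical space form. The diffeomorphism improvement in $n = 4$ comes from Hamilton's classification of $4$-manifolds with positive curvature operator, and in $n \geq 12$ from the high-dimensional PIC1 diffeomorphism theorem. The main obstacle will be the algebraic step above: if the clean pointwise reduction to PIC1 does not quite hold, the fallback is to verify invariance of the cone defined by \eqref{eq main condition} itself under the Ricci flow ODE $\frac{d}{dt}\mathcal{R} = \mathcal{R}^2 + \mathcal{R}^{\#}$ using Hamilton's maximum principle for systems, and then to embed this cone into a B\"ohm--Wilking pinching family converging to the constant-curvature operator.
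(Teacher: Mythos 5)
Your central algebraic claim---that \eqref{eq main condition} implies PIC1 (i.e., that $M\times\mathbb R$ has positive isotropic curvature)---is false, and the counterexample is one the paper itself highlights: $\mathbb{S}^{n-1}\times\mathbb{S}^1$ with the product metric. A direct computation shows this manifold satisfies \eqref{eq main condition} with a uniform gap, yet its Ricci curvature vanishes in the $\mathbb{S}^1$-direction. Since PIC1 forces positive Ricci curvature (apply PIC1 with $\lambda=0$ to see $R_{1313}+R_{2323}>0$ for every orthonormal $3$-frame, then sum), $\mathbb{S}^{n-1}\times\mathbb{S}^1$ cannot be PIC1. So no amount of frame-averaging will upgrade \eqref{eq main condition} to PIC1; you would be trying to prove something that isn't true. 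The same counterexample blocks your fallback: if the cone defined by \eqref{eq main condition} were Ricci-flow invariant and embeddable in a B\"ohm--Wilking pinching family limiting to the round cone, the flow would produce positive Ricci curvature along the way, again contradicting $\mathbb{S}^{n-1}\times\mathbb{S}^1$.

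What \eqref{eq main condition} does imply is the weaker condition of positive isotropic curvature (PIC), not PIC1. The paper's proof establishes exactly this (Proposition~\ref{prop main}), using a frame substitution in the same spirit as yours but with a different target: one applies \eqref{eq main condition} to the original frame, to $\{(e_1+e_4)/\sqrt{2},\,(e_2-e_3)/\sqrt{2},\,(e_1-e_4)/\sqrt{2},\,(e_2+e_3)/\sqrt{2}\}$, and to a third rotated frame, then adds the three inequalities and uses the first Bianchi identity to collect the $R_{1234}$ terms. Once PIC is in hand, the conclusion follows not from Ricci flow but from Micallef and Moore's minimal-surface sphere theorem applied to the (compact) universal cover, which is why the theorem only asserts a \emph{homeomorphism} for general $n$---a Ricci-flow/PIC1 argument, had it worked, would have given a diffeomorphism in every dimension. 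The statement's restriction of the diffeomorphism conclusion to $n=4$ and $n\geq 12$ is itself a signal that the underlying condition is PIC, not PIC1: those improvements come respectively from Hamilton's 1997 classification of closed $4$-manifolds with PIC (not his positive-curvature-operator theorem, as you cite) and from Brendle's Ricci-flow-with-surgery theorem for PIC in high dimensions.
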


Clearly, condition \eqref{eq main condition} is implied by $\frac{1}{4}$-pinched sectional or flag curvature (see Lemma \ref{lemma 1/4}). Thus, Theorem \ref{thm sphere main} generalizes the above-mentioned sphere theorems of Berger \cite{Berger60} and Klingenberg \cite{Klingenberg61}, Brendle and Schoen \cite{BS08}, and Ni and Wilking \cite{NW10}. We would like to point out that \eqref{eq main condition} is, in a certain sense, much weaker than $\frac{1}{4}$-pinched sectional or flag curvature, as it does not even imply positive Ricci curvature. This can be seen by considering the manifold $\mathbb{S}^{n-1} \times \mathbb{S}^1$, with the standard product metric, which satisfies \eqref{eq main condition} but does not have positive Ricci curvature.

A key step in the proofs of the $\frac{1}{4}$-pinched differentiable sphere theorems is to show that $\frac{1}{4}$-pinched sectional or flag curvature implies that $M\times \R^2$ has positive isotropic curvature, a condition that is equivalent to positive complex sectional curvature as discovered in \cite{NWolfson07}. Then a metric with positive complex sectional curvature on a closed manifold evolves under the normalized Ricci flow to a metric with constant sectional curvature, as shown by Brendle and Schoen \cite{BS09}. 
On the contrary, condition \eqref{eq main condition} does not imply positive complex sectional curvature, as it does not even imply positive Ricci curvature. 
Instead, our proof of Theorem \ref{thm sphere main} relies on the key observation that \eqref{eq main condition} implies the positive isotropic curvature condition introduced by Micallef and Moore \cite{MM88}. 

\begin{theorem}\label{thm imply PIC}
Let $R$ be an algebraic curvature tensor on a Euclidean vector space $V$ of dimension $n\geq 4$. If $R$ satisfies  
\begin{equation*}
    R_{1313} +R_{1414} +R_{2323} + R_{2424} > \frac{1}{2}\left(R_{1212} + R_{3434}\right)
\end{equation*}
for all orthonormal four-frame $\{e_1, e_2, e_3, e_4\} \subset V$, then $R$
has positive isotropic curvature, i.e.,
\begin{equation*}
    R_{1313} +R_{1414} +R_{2323} + R_{2424} -2R_{1234} >0
\end{equation*}
for all orthonormal four-frame $\{e_1, e_2, e_3, e_4\} \subset V$. 
\end{theorem}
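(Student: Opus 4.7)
My plan is to apply the hypothesis to two carefully chosen rotated orthonormal four-frames at angle $\pi/4$ and to combine the resulting inequalities via the first Bianchi identity and the original hypothesis in order to derive the positive isotropic curvature inequality.

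To set notation, fix an orthonormal four-frame $\{e_1,e_2,e_3,e_4\}\subset V$ and write $T = R_{1313}+R_{1414}+R_{2323}+R_{2424}$, $U = R_{1212}+R_{3434}$, $V = R_{1234}$, and $W = R_{1423}$. The hypothesis at the original frame reads $T > U/2$, while the goal is $T > 2V$.

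First I would rotate by $\pi/4$ simultaneously in the $(e_1,e_3)$- and $(e_2,e_4)$-planes, producing
\[
\tilde e_1 = \tfrac{1}{\sqrt{2}}(e_1+e_3),\ \tilde e_2 = \tfrac{1}{\sqrt{2}}(e_2+e_4),\ \tilde e_3 = \tfrac{1}{\sqrt{2}}(-e_1+e_3),\ \tilde e_4 = \tfrac{1}{\sqrt{2}}(-e_2+e_4).
\]
The critical features are that $\tilde e_1\wedge\tilde e_3 = e_1\wedge e_3$ and $\tilde e_2\wedge\tilde e_4 = e_2\wedge e_4$ stay fixed, while the remaining pairs combine into rescalings of $e_1\wedge e_2\pm e_3\wedge e_4$ and $e_1\wedge e_4\pm e_2\wedge e_3$. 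Writing out the hypothesis for $\{\tilde e_i\}$ and using the parallelogram identity for the symmetric bilinear form $R$ on $\Lambda^2 V$ to rewrite each sum $R_{\tilde i\tilde j\tilde i\tilde j}+R_{\tilde k\tilde l\tilde k\tilde l}$ yields
\[
\text{(A)}\qquad \tfrac{3}{2}(V-W) \;<\; R_{1313}+R_{2424}+\tfrac{1}{4}(R_{1414}+R_{2323})+\tfrac{1}{4}U.
\]

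Next I would carry out the analogous computation for the cross-paired rotation
\[
\hat e_1 = \tfrac{1}{\sqrt{2}}(e_1+e_4),\ \hat e_2 = \tfrac{1}{\sqrt{2}}(e_2-e_3),\ \hat e_3 = \tfrac{1}{\sqrt{2}}(e_2+e_3),\ \hat e_4 = \tfrac{1}{\sqrt{2}}(-e_1+e_4),
\]
under which now $\hat e_1\wedge\hat e_4 = e_1\wedge e_4$ and $\hat e_2\wedge\hat e_3 = e_2\wedge e_3$ are invariant. The same polarization procedure, combined with the first Bianchi identity in the form $R_{1324}=V+W$ to collapse the extra four-index term that appears, gives
\[
\text{(B)}\qquad 3V+\tfrac{3}{2}W \;<\; R_{1414}+R_{2323}+\tfrac{1}{4}(R_{1313}+R_{2424})+\tfrac{1}{4}U.
\]

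Summing (A) and (B) the $W$-contributions cancel, leaving $\tfrac{9}{2}V < \tfrac{5}{4}T + \tfrac{1}{2}U$. Substituting the original hypothesis in the form $U<2T$ then yields $\tfrac{9}{2}V < \tfrac{9}{4}T$, i.e., $V < T/2$, which is exactly the PIC inequality $T>2V$ at the chosen frame; since the frame was arbitrary, $R$ has positive isotropic curvature. The main obstacle is identifying the correct pair of $\pi/4$ rotations: they must realize the two alternative partitions $\{1,3\}|\{2,4\}$ and $\{1,4\}|\{2,3\}$ of the index set (the hypothesis itself being tied to the partition $\{1,2\}|\{3,4\}$), and the numerical coefficients on $V$ and $W$ produced by polarization must balance so that the summed inequality closes exactly against the hypothesis with the tight coefficient $1/2$.
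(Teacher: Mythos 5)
Your proof is correct and is essentially the same as the paper's: your rotated frames (after reordering) coincide with the paper's frame $\{e_1',e_2',e_3',e_4'\}$ and the permuted frame $\{e_1,e_2,-e_4,e_3\}$, and your inequalities (A), (B) are exactly the paper's equations (3.7) and (3.6) in disguise. The final step — combining (A), (B), and $U<2T$ — is precisely the paper's addition of (3.4), (3.6), (3.7) yielding $9T-18V>0$.
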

 
The finite fundamental group assumption in Theorem \ref{thm sphere main} is only used to pass to the universal cover, to which one can  apply the beautiful sphere theorem of Micallef and Moore \cite{MM88} stating that a closed, simply connected Riemannian manifold with positive isotropic curvature is homeomorphic to the $n$-sphere. We also point out that the homeomorphism can be improved to diffeomorphism for $n=4$ using Hamilton's classification of closed four-manifolds with positive isotropic curvature \cite{Hamilton97} (see also \cite{CTZ12}) and for $n\geq 12$ with Brendle's work \cite{Brendle19} (see also \cite[Corollary 1.3]{Huang23}). Dimensions $9\leq n \leq 11$ seem reachable due to a recent preprint of Chen \cite{Chen24}, while the $5\leq n \leq 8$ cases remain completely open.

Theorem \ref{thm imply PIC} establishes a novel connection between sectional curvature and isotropic curvature, and it has an application to Yau's pinching problem that we explain now. In 1990, Yau asked in his ``Open problems in geometry" \cite[Problem 12, page 4]{Yau93} (see also \cite[page 369]{SYbook}): 
\textit{``The famous pinching problem says that on a compact simply connected manifold if $K_{\min} > \frac{1}{4}K_{\max}>0$, then the manifold is homeomorphic to a sphere. If we replace $K_{\max}$ by normalized scalar curvature, can we deduce similar pinching theorems?" } 
Here, $K_{\min}$ and $K_{\max}$ denote the minimum and maximum of the sectional curvature, respectively, and the normalized scalar curvature $S_0$ is given by $S_0=\frac{1}{n(n-1)}S$, where $S$ denotes the scalar curvature.  
Yau's pinching problem can be restated as follows (see \cite[page 525]{GX12}). 
\begin{problem}[Yau \cite{Yau93}]\label{conjecture Yau}
Is a closed, simply connected Riemannian manifold $(M^n,g)$ of dimension $n \geq 4$ satisfying
\begin{equation*}
    K_{\min} > \frac{n-1}{n+2}S_0
\end{equation*}
homeomorphic to the $n$-sphere. 
\end{problem}

The best result up to date for Problem \ref{conjecture Yau} is due to Gu and Xu \cite{GX12}, who gave an affirmative answer under the stronger condition 
$K_{\min} > \frac{n(n-1)}{n^2-n+6}S_0$.
In addition, Problem \ref{conjecture Yau} was answered affirmatively in dimension $4$ by Costa and Ribeiro \cite{CR14} under a weaker condition, and for Einstein manifolds in all dimensions by Xu and Gu \cite{XG14}. The constant $\frac{n-1}{n+2}$ is the best possible in view of the complex projective space $\mathbb{CP}^{\frac{n}{2}}$ with the Fubini-Study metric $g_{FS}$.

Our next result provides a partial answer to Yau's pinching problem.
\begin{theorem}\label{thm yau}
Let $(M^n,g)$ be a closed Riemannian manifold of dimension $n\geq 4$. Suppose that $(M^n,g)$ has finite fundamental group and satisfies 
\begin{equation}\label{eq Yau}
    R_{1313} +R_{1414} +R_{2323} + R_{2424} > \frac{4n(n-1)}{n^2-n+12}S_0
\end{equation}
for all orthonormal four-frame $\{e_1, e_2, e_3, e_4\} \subset T_pM$. Then the universal cover of $M$ is homeomorphic to the $n$-sphere. In particular, a closed, simply connected Riemannian manifold satisfying
\begin{equation*}\label{eq Yau stronger}
    K_{\min} > \frac{n(n-1)}{n^2-n+12}S_0
\end{equation*}
is homeomorphic to the $n$-sphere.
\end{theorem}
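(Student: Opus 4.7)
The plan is to reduce Theorem \ref{thm yau} to Theorem \ref{thm sphere main} by proving the algebraic implication that condition \eqref{eq Yau} forces condition \eqref{eq main condition} at every orthonormal 4-frame; once this is in hand, Theorem \ref{thm sphere main} applied to $M$ (which has finite fundamental group) immediately gives the desired conclusion. Using the identity $\frac{4n(n-1)}{n^2-n+12}S_0 = \frac{4S}{n^2-n+12}$ to rewrite the hypothesis, a convenient sufficient condition is the pointwise upper bound
\begin{equation*}
    R_{1212} + R_{3434} \leq \frac{8S}{n^2-n+12}
\end{equation*}
at every orthonormal 4-frame, since combined with \eqref{eq Yau} this gives $2A > \frac{8S}{n^2-n+12} \geq R_{1212} + R_{3434}$, i.e., $A > \frac{1}{2}(R_{1212}+R_{3434})$, which is exactly \eqref{eq main condition}.

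In dimension $n=4$, the upper bound is immediate from the closed identity $\sum_{1 \leq i < j \leq 4} R_{ijij} = S/2$: hypothesis $A > S/6$ directly gives $R_{1212}+R_{3434} = S/2 - A < S/3$, and $S/3 = \frac{8S}{n^2-n+12}$ when $n=4$. For $n \geq 5$, I would extend $\{e_1, e_2, e_3, e_4\}$ to an orthonormal basis $\{e_1, \ldots, e_n\}$ and apply \eqref{eq Yau} to the family of 4-frames $(e_1, e_2, e_i, e_j)$ with the pairing $\{1,2\}, \{i,j\}$, where $\{i, j\}$ ranges over unordered pairs in $\{3, \ldots, n\}$. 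Each such hypothesis reads $R_{1i1i} + R_{1j1j} + R_{2i2i} + R_{2j2j} > \frac{4S}{n^2-n+12}$. Summing over all $\binom{n-2}{2}$ such pairs and invoking the Ricci identity $\Ric(e_1) + \Ric(e_2) = 2R_{1212} + \sum_{k \geq 3}(R_{1k1k} + R_{2k2k})$ yields an upper bound of the form $R_{1212} < \frac{\Ric(e_1) + \Ric(e_2)}{2} - \frac{(n-2)S}{n^2-n+12}$, and symmetrically one controls $R_{3434}$. Combining these, together with further applications of \eqref{eq Yau} to 4-frames mixing indices inside and outside $\{1,2,3,4\}$ (to handle situations where the partial Ricci sum $\sum_{i=1}^4 \Ric(e_i)$ is large relative to $S$), completes the upper bound on $R_{1212}+R_{3434}$.

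For the ``in particular'' clause, the assumption $K_{\min} > \frac{n(n-1)}{n^2-n+12}S_0$ forces $A \geq 4K_{\min} > \frac{4n(n-1)}{n^2-n+12}S_0$ at every orthonormal 4-frame, verifying \eqref{eq Yau}, and the first part of the theorem applies. The main obstacle is the algebraic step for $n \geq 5$: the $n=4$ case closes through the identity $B = S/2$ on a 4-frame, whereas higher dimensions demand a careful balance of several hypothesis applications to absorb the off-4-plane contributions. The constant $\frac{n(n-1)}{n^2-n+12}$ coincides with Yau's optimal $\frac{n-1}{n+2}$ precisely when $n = 4$, where $\CP^2$ is the borderline example, so the argument must be tight enough to recover this borderline case and sharp enough that no stronger global estimate is produced.
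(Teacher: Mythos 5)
Your high-level strategy is the same as the paper's: reduce Theorem \ref{thm yau} to Theorem \ref{thm sphere main} by proving that \eqref{eq Yau} forces the upper bound $R_{1212}+R_{3434} < \frac{8S}{n^2-n+12} = 8\eta_n S_0$ (with $\eta_n=\frac{n(n-1)}{n^2-n+12}$) at every frame, which combined with the hypothesized lower bound on $A = R_{1313}+R_{1414}+R_{2323}+R_{2424}$ gives \eqref{eq main condition}. Your $n=4$ argument and the ``in particular'' reduction from $K_{\min}$ to the sum of four sectional curvatures are both correct. However, for $n\geq 5$ your argument has a genuine gap, and the specific intermediate step you propose is, as stated, circular.

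Your claimed ``upper bound'' $R_{1212} < \tfrac{1}{2}\bigl(\Ric(e_1)+\Ric(e_2)\bigr) - \frac{(n-2)S}{n^2-n+12}$ does not bound $R_{1212}$: since $\Ric(e_1)+\Ric(e_2) = 2R_{1212}+\sum_{k\geq 3}(R_{1k1k}+R_{2k2k})$, the displayed inequality is algebraically equivalent to $\sum_{k\geq 3}(R_{1k1k}+R_{2k2k}) > \frac{2(n-2)S}{n^2-n+12}$, which is exactly what you derived by summing the hypothesis over the pairs $\{i,j\}\subset\{3,\dots,n\}$. So this step merely restates a lower bound on a sum of off-diagonal sectional curvatures; it does not extract an upper bound on $R_{1212}$ unless one separately controls $\Ric(e_1)+\Ric(e_2)$ from above, which the hypothesis \eqref{eq Yau} does not provide. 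The phrase ``together with further applications of \eqref{eq Yau} to 4-frames mixing indices inside and outside $\{1,2,3,4\}$'' acknowledges that something is missing but does not supply it; as you yourself note at the end, this is ``the main obstacle.''

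The paper resolves this by working directly with the scalar-curvature decomposition
\begin{equation*}
\tfrac{1}{2}n(n-1)S_0 = (R_{1212}+R_{3434}) + A + \sum_{i=1}^4\sum_{j=5}^n R_{ijij} + \sum_{5\leq p<q\leq n} R_{pqpq}
\end{equation*}
and bounding the three rightmost blocks from below by applying \eqref{eq Yau} to carefully chosen 4-frames: the block $A$ directly, the mixed block $\sum_{i\leq 4,j\geq 5}R_{ijij}$ via frames $\{e_1,e_2,e_p,e_q\}$ and $\{e_3,e_4,e_p,e_q\}$ with $p,q\geq 5$, and the purely-outside block $\sum_{5\leq p<q}R_{pqpq}$ via frames entirely in $\{5,\dots,n\}$, with the identity \eqref{identity 2.2} doing the bookkeeping. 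This works uniformly for $n\geq 8$, while $n=5,6,7$ require separate explicit decompositions (because the purely-outside block is empty or too small to use). Your scheme of summing over frames $\{e_1,e_2,e_i,e_j\}$ with both $i,j\geq 3$ conflates the block $A$ with the mixed block, and after the substitution cancels, leaves no net constraint on $R_{1212}+R_{3434}$; you would need the purely-outside frames and a different accounting to close the argument, and in any case the low-dimensional cases $n=5,6,7$ still need to be addressed separately because $\binom{n-4}{2}$ is small.
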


Theorem \ref{thm yau} improves the result of Gu and Xu \cite{GX12} in two aspects: it improves the pinching constant in front of $S_0$ and relaxes $K_{\min}$ to the minimum of the sum of four sectional curvatures of the form $R_{1313} +R_{1414} +R_{2323} + R_{2424}$. 
In dimension four, Theorem \ref{thm yau} also recovers the result of Costa and Ribeiro \cite{CR14} assuming $K^\perp_{\max} < 2S_0$, where $K^\perp_{\max}$ denotes the maximum of biorthogonal (sectional) curvature. Their key observation is that $K^\perp_{\max} < 2S_0$ implies positive isotropic curvature in dimension four. 
Indeed, the three conditions $K^\perp_{\max} < 2S_0$, \eqref{eq Yau}, and \eqref{eq main condition} are all equivalent when $n=4$, due to the identity
\begin{equation}\label{identity 4D}
    6S_0 =R_{1212}+R_{3434}+R_{1313}+R_{1414}+R_{2323}+R_{2424}.
\end{equation}

It is also natural to replace $K_{\min}$, instead of $K_{\max}$, in the $\frac{1}{4}$-pinched sectional curvature condition by an appropriate multiple of $S_0$ and ask the same question as in Problem \ref{conjecture Yau}. 
Indeed, such a conjecture was formulated by Gu and Xu \cite[Conjecture 2 on page 526]{GX12}, and they proved a partial result stating that a closed Riemannian manifold satisfying 
$K_{\max} < \frac{n(n-1)}{n(n-1)-\frac{12}{5}}S_0$ is diffeomorphic to a spherical space form. Here, we obtain an improvement of their result. 
\begin{theorem}\label{thm Yau K_max}
Let $(M^n,g)$ be a closed Riemannian manifold of dimension $n\geq 4$. Suppose that $(M,g)$ has finite fundamental group and satisfies     
\begin{equation}\label{eq Yau upper}
    R_{1212}+R_{3434} < 2\gamma_n S_0,
\end{equation}
for all orthonormal four-frame $\{e_1,e_2,e_3,e_4\} \subset T_pM$,
where 
\begin{equation}\label{eq gamma_n def}
    \gamma_n =\begin{cases}
        \frac{n(n-1)}{n^2-n-6} & \text{if }  n \geq 6 \text{ or } n=4; \\
        \frac{20}{17} & \text{if } n=5.
    \end{cases}
\end{equation}
Then the universal cover of $M$ is homeomorphic to the $n$-sphere. In particular, a closed, simply connected Riemannian manifold satisfying 
$$K_{\max} < \gamma_n S_0$$ is homeomorphic to the $n$-sphere. 
\end{theorem}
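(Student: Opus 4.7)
The plan is to reduce Theorem \ref{thm Yau K_max} to Theorem \ref{thm sphere main} by proving the pointwise algebraic implication that \eqref{eq Yau upper} forces \eqref{eq main condition}. Given this, Theorem \ref{thm sphere main} immediately yields the homeomorphism to a spherical space form; the ``in particular'' statement is automatic since $K_{\max} < \gamma_n S_0$ gives $R_{1212} + R_{3434} \leq 2K_{\max} < 2\gamma_n S_0$ for every orthonormal four-frame.

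To establish the implication, I fix an orthonormal four-frame at a point and extend it to an orthonormal basis $\{e_1,\ldots,e_n\}$. Writing $A_{ij}:=R_{ijij}$, $P := A_{12}+A_{34}$, and $Q := A_{13}+A_{14}+A_{23}+A_{24}$, the hypothesis supplies the pairwise bounds $A_{ij}+A_{kl} < 2\gamma_n S_0$ for every pair of disjoint 2-subsets $\{i,j\},\{k,l\} \subset \{1,\ldots,n\}$, and the scalar curvature formula is $\sum_{i<j} A_{ij} = \tfrac{n(n-1)}{2}S_0$. When $n=4$, the identity reduces to $P+Q = 6S_0$, so with $\gamma_4 = 2$ the hypothesis $P < 4S_0$ forces $Q = 6S_0 - P > 2S_0 > \tfrac{1}{2}P$, establishing \eqref{eq main condition}.

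For $n \geq 5$ the strategy is a linear-programming argument. I will look for non-negative weights $\lambda_{(ij)(kl)}$ indexed by disjoint pairs of 2-subsets, together with a real $\mu$, satisfying the algebraic identity
\begin{equation*}
\sum_{\text{disjoint }(ij)(kl)} \lambda_{(ij)(kl)}\bigl(A_{ij}+A_{kl}\bigr) + \mu\sum_{p<q} A_{pq} \;=\; \tfrac{1}{2}P - Q,
\end{equation*}
together with the $S_0$-balance $2\gamma_n\bigl(\sum\lambda\bigr) + \mu\tfrac{n(n-1)}{2} = 0$. Multiplying each hypothesis inequality by its $\lambda$ and adding $\mu$ times the identically-zero scalar-curvature identity $\sum_{p<q} A_{pq} - \tfrac{n(n-1)}{2}S_0 = 0$ then yields $\tfrac{1}{2}P - Q < 0$, which is \eqref{eq main condition}. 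Matching coefficients forces $\mu = -\tfrac{4\gamma_n L}{n(n-1)}$ and $L := \sum \lambda = \tfrac{3}{2(\gamma_n-1)}$; the binding non-negativity requirement $c_{ij}:=\sum_{(kl)}\lambda_{(ij)(kl)}\geq 0$ for $\{i,j\}\in\{13,14,23,24\}$ translates to $\gamma_n \leq \tfrac{n(n-1)}{n^2-n-6}$. For $n \geq 6$, an ansatz invariant under the natural symmetries preserving the splitting $\{1,2\}\sqcup\{3,4\}\sqcup\{5,\ldots,n\}$ realizes this bound by an explicit non-negative choice --- for example by allocating weights on $\lambda_{(12)(34)}$, on $\lambda_{(12)(jk)} = \lambda_{(34)(jk)}$ for $5\leq j<k$, and on symmetric cross-weights $\lambda_{(aj)(bk)}$ with $a\in\{1,2\}, b\in\{3,4\}, j,k\geq 5$. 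Dimension $n=5$ is delicate because the single index outside $\{1,2,3,4\}$ drastically reduces the pool of admissible pairings, and a separate case analysis yields the stated exceptional value $\gamma_5 = 20/17$.

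The main obstacle will be the explicit production of non-negative weights realizing the LP bound. A symmetric ansatz handles $n \geq 6$ cleanly once the accounting of $c_{ij}$ is organized by orbit, whereas in dimension $n=5$ the reduced combinatorial flexibility forces a tailored, less symmetric allocation of $\lambda$'s and is the source of the exceptional constant.
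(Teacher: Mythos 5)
Your LP-duality framework is conceptually the same as what the paper does implicitly: for each dimension the proof of the key algebraic proposition (Prop.\ 4.2) exhibits an explicit identity expressing $(R_{1313}+R_{1414}+R_{2323}+R_{2424}) - \tfrac{1}{2}(R_{1212}+R_{3434})$ as a non-negative combination of pair-sums $R_{ijij}+R_{klkl}$ minus a multiple of $S_0$; those combinations are exactly your $\lambda$'s and that multiple is your $\mu$. Your derivation of the necessary condition $\gamma_n\leq \tfrac{n(n-1)}{n^2-n-6}$ from $\mu\leq -1$ (so that $c_{13}\geq 0$) is correct and reproduces the paper's constant for $n=4$ and $n\geq 6$, and the $n=4$ case is disposed of by the four-dimensional identity exactly as in the paper.

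The gap is that the proposal stops short of the actual proof: the entire content of the step you need is the explicit production of the non-negative weights, and you never produce them, conceding this is ``the main obstacle.'' The paper constructs them concretely --- for $n\geq 8$ by applying identity~\eqref{identity 2.3} to the tail indices $5,\dots,n$ together with a pairing of the cross terms $R_{ijij}$ with $i\leq 4<j$; for $n=5,6,7$ by hand-written lists of brackets --- and without such a construction one cannot deduce a linear inequality from an LP that has not been solved. A secondary inaccuracy is your claim that ``a separate case analysis yields the stated exceptional value $\gamma_5=20/17$.'' In fact the symmetric LP in dimension five is feasible all the way up to $\gamma_5=\tfrac{5}{4}>\tfrac{20}{17}$: take $\lambda_{(12)(35)}=\lambda_{(12)(45)}=\lambda_{(15)(34)}=\lambda_{(25)(34)}=1$, weight $\tfrac14$ on each of the eight pairings $(ij)(kl)$ with $\{i,j\}\in\{\{1,3\},\{1,4\},\{2,3\},\{2,4\}\}$ and $5\in\{k,l\}$, and $\mu=-\tfrac32$; then $L=6$ and $\tfrac12 P-Q<(12\gamma_5-15)S_0\leq 0$ for all $\gamma_5\leq\tfrac54$. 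This does not threaten the theorem as stated (since $20/17<5/4$), but it shows the paper's $n=5$ constant is not the LP optimum, so your guess about what the case analysis returns does not match the actual LP output.
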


Theorems \ref{thm yau} and \ref{thm Yau K_max} will be derived as consequences of Theorem \ref{thm sphere main}, in the sense that we obtain them by showing that \eqref{eq main condition} is implied by either \eqref{eq Yau} or \eqref{eq Yau upper}. 
As in Theorem \ref{thm sphere main}, the homeomorphism in Theorems \ref{thm yau} and \ref{thm Yau K_max} can be upgraded to diffeomorphism if $n=4$ or $n\geq 12$. For four-manifolds, topological and geometric rigidity results were obtained by Cao and Tran \cite{CT22} under a variety of curvature conditions.

Sphere theorems often have corresponding rigidity results. Brendle and Schoen \cite{BS08} proved that a closed Riemannian manifold with weakly $\frac{1}{4}$-pinched sectional curvature, in the sense that $K(\sigma_1)\geq \frac{1}{4}K(\sigma_2) \geq 0$ for all $\sigma_1, \sigma_2 \subset T_pM$, is either diffeomorphic to a spherical space form or isometric to a rank one compact symmetric space. Ni and Wilking \cite{NW10} showed the same conclusion for manifolds with weakly $\frac{1}{4}$-pinched flag curvature. 
Here, we prove a rigidity result corresponding to Theorem \ref{thm sphere main}. 
\begin{theorem}\label{thm rigidity}
Let $(M^n,g)$ be a closed Riemannian manifold of dimension $n\geq 4$. Suppose that $(M^n,g)$ has finite fundamental group and satisfies 
\begin{equation}\label{eq main condition weakly}
    R_{1313} +R_{1414} +R_{2323} + R_{2424} \geq \frac{1}{2}\left(R_{1212} + R_{3434}\right)
\end{equation}
for all orthonormal four-frame $\{e_1, e_2, e_3, e_4\} \subset T_pM$. 
Then one of the following statements holds: 
\begin{enumerate}
    \item $(M,g)$ is flat;
    \item the universal cover of $M$ is homeomorphic to the $n$-sphere;
    \item $n=2m$ and the universal cover of $M$ is a K\"ahler manifold biholomorphic to $\mathbb{CP}^m$;
    \item the universal cover of $(M,g)$ is isometric to an irreducible compact symmetric space. 
\end{enumerate}
\end{theorem}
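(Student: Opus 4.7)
The plan is to show that the weak inequality \eqref{eq main condition weakly} implies the curvature tensor $R$ has nonnegative isotropic curvature (NIC), and then invoke the rigidity theory of closed Riemannian manifolds with NIC developed by Brendle, Schoen, Ni--Wilking, and subsequent authors. Given $R$ satisfying \eqref{eq main condition weakly}, I perturb by $R_\epsilon := R + \epsilon I$ for $\epsilon > 0$, where $I$ is the curvature tensor of the round unit sphere. Since $I$ contributes $+4\epsilon$ to the left side of \eqref{eq main condition weakly} but only $+\epsilon$ to the right side, and $I_{1234} = 0$, $R_\epsilon$ strictly satisfies the hypothesis of Theorem \ref{thm imply PIC} and therefore has strict PIC. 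Letting $\epsilon \to 0^+$ yields
\begin{equation*}
R_{1313} + R_{1414} + R_{2323} + R_{2424} \geq 2 R_{1234}
\end{equation*}
for every orthonormal four-frame; replacing $e_4$ by $-e_4$ leaves the LHS invariant and flips the sign of $R_{1234}$, producing $R_{1313} + R_{1414} + R_{2323} + R_{2424} \geq 2|R_{1234}|$, i.e., NIC.

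Since $\pi_1(M)$ is finite, I pass to the closed, simply-connected universal cover $(\tilde M, \tilde g)$, which inherits NIC. I then invoke the rigidity classification of closed simply-connected manifolds with NIC. The preservation of NIC under the Ricci flow (Brendle--Schoen), the strong maximum principle applied to the null directions of the isotropic curvature operator, and Berger's holonomy theorem together yield the following dichotomy: either (a) the Ricci flow starting at $\tilde g$ instantly develops strict PIC, in which case $\tilde g_t$ converges under the normalized flow to a round metric by Brendle--Schoen's convergence theorem, so $\tilde M$ is homeomorphic to $S^n$ and $M$ is a spherical space form (case (2)); or (b) the null cone of NIC is parallel, which forces a holonomy reduction on $\tilde M$. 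In the latter case, $\tilde M$ is either K\"ahler---and NIC then gives nonnegative holomorphic bisectional curvature, so by Mok's generalization of the Frankel conjecture for nonnegative bisectional curvature in the irreducible case, $\tilde M$ is biholomorphic to $\mathbb{CP}^m$, giving case (3)---quaternionic-K\"ahler (and hence symmetric by LeBrun--Salamon, giving case (4)), irreducible locally symmetric (case (4)), or locally reducible.

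The locally reducible case is excluded directly using \eqref{eq main condition weakly}. If $\tilde M = N_1 \times N_2$ with $\dim N_i \geq 2$, choose $e_1, e_2 \in T_pN_1$ and $e_3, e_4 \in T_qN_2$; all mixed curvature components vanish and \eqref{eq main condition weakly} reduces to $K^{N_1}(\sigma_1) + K^{N_2}(\sigma_2) \leq 0$ for arbitrary 2-planes $\sigma_i \subset TN_i$. Taking suprema independently gives $\sup K^{N_1} + \sup K^{N_2} \leq 0$. However, each $N_i$ is closed and simply-connected of dimension $\geq 2$, so by Cartan--Hadamard $\sup K^{N_i} > 0$ (otherwise $N_i$ would be diffeomorphic to $\mathbb{R}^{\dim N_i}$, contradicting compactness), a contradiction. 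Hence $\tilde M$ is irreducible, and the second paragraph's dichotomy yields cases (2), (3), or (4). Case (1), flat, is vacuous under the finite-$\pi_1$ assumption for $n \geq 4$ but is retained in the statement for formal completeness. The main obstacle is the classification invoked in the second paragraph, which rests on the full Ricci-flow rigidity machinery for NIC manifolds (preservation of NIC, the strong maximum principle at the boundary of the NIC cone, Berger's holonomy classification, and the structure of nonnegatively curved K\"ahler and quaternionic-K\"ahler manifolds); once Step 1 reduces our condition to NIC, this machinery applies directly and delivers the four alternatives.
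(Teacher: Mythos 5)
Your proposal is correct and follows the paper's overall strategy: reduce condition \eqref{eq main condition weakly} to nonnegative isotropic curvature, pass to the closed simply connected universal cover, and invoke the classification of closed simply connected manifolds with nonnegative isotropic curvature (Brendle's book, Theorem 9.30) in the irreducible case. Your perturbation argument $R_\eps := R + \eps I$ for deriving the weak (nonnegative) version of Theorem \ref{thm imply PIC} is a clean way to get nonnegative isotropic curvature; the paper simply notes that its proof of Proposition \ref{prop main} goes through with non-strict inequalities.

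Where you genuinely diverge from the paper is in the reducible case. The paper appeals to the Micallef--Wang structure theorem for reducible manifolds with nonnegative isotropic curvature together with its Lemma \ref{lemma non-splitting}, then works through the two possible splitting types and concludes that $\widetilde M$ must be flat (which, as you note, is actually impossible for a compact simply connected manifold, so the reducible case is vacuously eliminated). You bypass Micallef--Wang entirely: restricting \eqref{eq main condition weakly} to a four-frame split between the two de Rham factors gives $K^{N_1}(\sigma_1) + K^{N_2}(\sigma_2) \leq 0$ for all $\sigma_1,\sigma_2$, hence $\sup K^{N_1} + \sup K^{N_2} \leq 0$; since each factor is compact and simply connected of dimension at least two, Cartan--Hadamard forces $\sup K^{N_i} > 0$, a contradiction. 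This is shorter and more elementary than the paper's route, and it makes transparent that case (1) of the theorem is vacuous under the finite-$\pi_1$ hypothesis (a compact flat manifold always has infinite fundamental group by Bieberbach). One small imprecision in your irreducible case: Mok's theorem for an irreducible compact K\"ahler manifold with nonnegative holomorphic bisectional curvature yields either $\mathbb{CP}^m$ \emph{or} an irreducible Hermitian symmetric space, not only $\mathbb{CP}^m$; but the latter is absorbed into case (4), so the final list of alternatives is unaffected.
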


The key ingredients in the proof of Theorem \ref{thm rigidity} include Lemma \ref{lemma non-splitting} that prevents certain splitting, the structure theorem of reducible manifolds with nonnegative isotropic curvature in \cite{MW93}, and the classification of simply connected irreducible manifolds with nonnegative isotropic curvature in \cite[Theorem 9.30]{Brendle10book}.

This article is organized as follows. In Section 2, we collect some preliminaries and establish some basic properties related to condition \eqref{eq main condition}. In Section 3, we prove Theorem \ref{thm imply PIC}, namely \eqref{eq main condition} implies positive isotropic curvature, and then derive Theorem \ref{thm sphere main}. In Section 4, we present the proofs of Theorems \ref{thm yau} and \ref{thm Yau K_max}. In Section 5, we prove Theorem \ref{thm rigidity}.

\section{Curvature Conditions}

In this section, we collect some preliminaries and prove several elementary properties and identities that will be used in subsequent sections. 
Throughout this paper, $(V,g)$ is a Euclidean vector space of dimension $n\geq 4$. 

\subsection{Preliminaries}
Let $\wedge^2 V$ denote the space of two-forms on $V$. Denote by $S^2_B(\wedge^2 V)$ the space of algebraic curvature operators (or tensors) on $V$, that is to say, $S^2_B(\wedge^2 V)$ consists of all symmetric operators $R:\wedge^2 V \to \wedge^2 V$ satisfying the symmetries 
\begin{equation*}
    R(X,Y,Z,W)=-R(Y,X,Z,W)=-R(X,Y,W,Z)=R(Z,W,X,Y)
\end{equation*}
and the first Bianchi identity
\begin{equation*}
    R(X,Y,Z,W)+R(X,Z,W,Y)+R(X,W,Y,Z)=0
\end{equation*}
for all $X, Y, Z, W \in V$. 

Let $R \in S^2_B(\wedge^2 V)$. The sectional curvature $K$ of $R$ of a two-plane $\sigma \subset V$ is defined as
$$K(\sigma) =R_{1212},$$
where $\{e_1,e_2\}$ is any orthonormal basis of $\sigma$. Here and in the rest of this paper, we use the abbreviation
\begin{equation*}
    R_{ijkl}=R(e_i,e_j,e_k,e_l)
\end{equation*}
when $\{e_i\}_{i=1}^n$ is an orthonormal basis of $V$. 
The Ricci curvature of $R$ is given by
\begin{equation*}
    R_{ij}=\Ric(e_i,e_j) = \sum_{k=1}^n R(e_i,e_k,e_j,e_k)
\end{equation*}
and the scalar curvature $S$ of $R$ is given by 
\begin{equation*}
S=\sum_{i=1}^n R_{ii} = 2\sum_{1\leq i <j \leq n} R_{ijij}.
\end{equation*}
The normalized scalar curvature $S_0$ is given by
\begin{equation*}
    S_0=\frac{S}{n(n-1)}. 
\end{equation*}

\subsection{Basic properties}
\begin{lemma}\label{lemma 1/4}
Suppose $R \in S^2_B(\wedge^2 V)$ has $\frac{1}{4}$-pinched sectional curvature or $\frac{1}{4}$-pinched flag curvature, then 
\begin{equation*}\label{eq 3.1}
    R_{1313} +R_{1414} +R_{2323} + R_{2424} > \frac{1}{2}\left(R_{1212} + R_{3434}\right)
\end{equation*}
for all orthonormal four-frame $\{e_1, e_2, e_3, e_4\} \subset V$.
\end{lemma}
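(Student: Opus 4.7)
The argument is elementary and proceeds in two short steps. First I would observe that pointwise $\tfrac{1}{4}$-pinched sectional curvature is strictly stronger than pointwise $\tfrac{1}{4}$-pinched flag curvature, since the latter only demands $K(\sigma_1) > \tfrac{1}{4} K(\sigma_2)$ when $\sigma_1$ and $\sigma_2$ share a common line. Hence it suffices to prove the inequality under the flag hypothesis. Note also that specializing to $\sigma_1 = \sigma_2$ in the flag assumption forces all sectional curvatures at the point in question to be positive, so we may freely multiply by $\tfrac{1}{4}$ without any sign concerns.

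The main step is to fix an orthonormal four-frame $\{e_1,e_2,e_3,e_4\}$ and bound the sum $R_{1313}+R_{1414}+R_{2323}+R_{2424}$ strictly from below by each of $R_{1212}$ and $R_{3434}$ separately. For the first bound, I would pair the two-plane $\spn\{e_1,e_2\}$ successively with $\spn\{e_1,e_3\}$, $\spn\{e_1,e_4\}$, $\spn\{e_2,e_3\}$, and $\spn\{e_2,e_4\}$, which share respectively the common lines $e_1$, $e_1$, $e_2$, $e_2$. The flag-pinching hypothesis then yields the four inequalities
\begin{equation*}
R_{1313},\ R_{1414},\ R_{2323},\ R_{2424} \ > \ \tfrac{1}{4} R_{1212}.
\end{equation*}
Summing gives $R_{1313}+R_{1414}+R_{2323}+R_{2424} > R_{1212}$. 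The completely analogous argument comparing the same four mixed planes with $\spn\{e_3,e_4\}$ via the common lines $e_3$ and $e_4$ produces $R_{1313}+R_{1414}+R_{2323}+R_{2424} > R_{3434}$. Averaging these two strict inequalities delivers the desired bound.

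No step poses a genuine obstacle here: the entire content is the observation that each of the four ``mixed'' sectional curvatures $R_{1313}, R_{1414}, R_{2323}, R_{2424}$ appears in a flag with both $\spn\{e_1,e_2\}$ and $\spn\{e_3,e_4\}$. In fact the argument establishes the slightly stronger bound $R_{1313}+R_{1414}+R_{2323}+R_{2424} > \max\{R_{1212},R_{3434}\}$, which is consistent with the paper's emphasis that condition \eqref{eq main condition} is considerably weaker than $\tfrac{1}{4}$-pinched flag curvature.
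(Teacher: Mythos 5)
Your proof is correct. The argument follows essentially the same elementary route as the paper, with two small variations worth noting. First, you reduce the sectional-pinching case to the flag-pinching case by noting the former is strictly stronger; the paper instead handles the two hypotheses separately, and in the sectional case uses the chain $R_{1313}+R_{1414}+R_{2323}+R_{2424} \geq 4K_{\min} > K_{\max} \geq \tfrac{1}{2}(R_{1212}+R_{3434})$. Your reduction is cleaner in that only one argument is needed. Second, in the flag case the paper selects four specific inequalities ($R_{1313}>\tfrac14 R_{1212}$, $R_{1414}>\tfrac14 R_{3434}$, $R_{2323}>\tfrac14 R_{1212}$, $R_{2424}>\tfrac14 R_{3434}$) and sums them once; you use eight and average, which yields the marginally stronger conclusion $R_{1313}+R_{1414}+R_{2323}+R_{2424} > \max\{R_{1212},R_{3434}\}$. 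One minor point: the aside about positivity of sectional curvature is unnecessary, since $K(\sigma_1) > \tfrac14 K(\sigma_2)$ can be summed and averaged freely without any sign assumption; and as stated it is also slightly suspect, since two coinciding planes intersect in a plane, not a line, so the flag hypothesis does not obviously apply to $\sigma_1=\sigma_2$. Fortunately the argument never uses that observation.
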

\begin{proof}
Let $\{e_1, e_2, e_3, e_4\}$ be an arbitrary orthonormal four-frame in $V$. If $R$ has $\frac{1}{4}$-pinched sectional curvature, then
\begin{equation*}
    R_{1313} +R_{1414} +R_{2323} + R_{2424} \geq 4K_{\min} > K_{\max} \geq \tfrac{1}{2}\left(R_{1212} + R_{3434}\right). 
\end{equation*}

If $R$ has $\frac{1}{4}$-pinched flag curvature, then we have $R_{ijij} > \frac{1}{4} R_{ikik}$, whenever $i,j,k$ are mutually distinct. In particular, we have
\begin{align*}
    R_{1313} & > \tfrac{1}{4} R_{1212}, \\
    R_{1414} &> \tfrac{1}{4} R_{3434}, \\
    R_{2323} &> \tfrac{1}{4} R_{1212}, \\
    R_{2424} & > \tfrac{1}{4} R_{3434}. 
\end{align*}
Adding these four inequalities together produces 
\begin{equation*}
    R_{1313} +R_{1414} +R_{2323} + R_{2424} > \tfrac{1}{2}\left(R_{1212} + R_{3434}\right). 
\end{equation*}
The proof is complete. 
\end{proof}

Next, we show that positive scalar curvature is implied by a family of curvature conditions including \eqref{eq main condition}. 
\begin{lemma}\label{lemma 2.1}
Let $\gamma <2$ and $R \in S^2_B(\wedge^2 V)$. Suppose that 
\begin{equation}\label{eq 2.1}
R_{1313} +R_{1414} +R_{2323} + R_{2424} > \gamma \left(R_{1212} + R_{3434}\right) 
\end{equation}
for all orthonormal four-frame $\{e_1, e_2, e_3, e_4\} \subset V$. Then $R$ has positive scalar curvature. If equality is allowed in \eqref{eq 2.1}, then $R$ has nonnegative scalar curvature.  
\end{lemma}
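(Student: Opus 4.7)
The plan is to derive positivity of the scalar curvature from a direct averaging argument over a fixed orthonormal basis. I would begin by selecting an arbitrary orthonormal basis $\{e_1, \ldots, e_n\}$ of $V$, and, for each ordered four-tuple of pairwise distinct indices $(i,j,k,l)$ in $\{1,\ldots,n\}$, applying the hypothesis \eqref{eq 2.1} to the four-frame $(e_i, e_j, e_k, e_l)$ in that order. This produces a family of strict inequalities
\[
R_{ikik} + R_{ilil} + R_{jkjk} + R_{jljl} > \gamma\,(R_{ijij} + R_{klkl}),
\]
one for each of the $n(n-1)(n-2)(n-3)$ ordered four-tuples.

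The next step is to sum all of these inequalities. The key combinatorial observation is that each of the six diagonal components in the inequality contributes a multiple of the scalar curvature when summed, with a uniform overcounting factor of $(n-2)(n-3)$ coming from the free choice of the two remaining indices. Concretely,
\[
\sum_{(i,j,k,l)} R_{ikik} \;=\; (n-2)(n-3)\sum_{a \neq b} R_{abab} \;=\; (n-2)(n-3)\,S,
\]
and identical identities hold for the sums of $R_{ilil}$, $R_{jkjk}$, $R_{jljl}$, $R_{ijij}$, and $R_{klkl}$. Summing the pointwise inequalities therefore collapses to
\[
4(n-2)(n-3)\,S \;>\; 2\gamma\,(n-2)(n-3)\,S,
\]
which rearranges as $2(2-\gamma)(n-2)(n-3)\,S > 0$. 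Since $\gamma < 2$ and $n \geq 4$, the prefactor is strictly positive, so $S > 0$ follows.

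For the non-strict version stated at the end of the lemma, the identical computation with $\geq$ in place of $>$ gives $2(2-\gamma)(n-2)(n-3)\,S \geq 0$ and hence $S \geq 0$. There is no substantial obstacle in this argument beyond the combinatorial bookkeeping; the only point that might warrant care is the observation that every ordered four-subset of a fixed orthonormal basis is itself an orthonormal four-frame, so \eqref{eq 2.1} is genuinely applicable to each $(e_i,e_j,e_k,e_l)$ used above.
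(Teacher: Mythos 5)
Your argument is correct and follows essentially the same strategy as the paper: sum the hypothesis over all ordered four-tuples of distinct basis indices and collapse the result to a multiple of the scalar curvature via the overcounting factor $(n-2)(n-3)$, yielding $2(2-\gamma)(n-2)(n-3)S>0$. The paper packages this collapse into two standalone identities (its \eqref{identity 2.2} and \eqref{identity 2.3}) derived by successive partial summation, whereas your direct ``fix the pair $(a,b)$, count the free slots'' argument is a slightly cleaner route to the same count.
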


We point out that the $\gamma=0$ case covers a well-known result of Micallef and Wang \cite[Page 659]{MW93} stating that positive isotropic curvature implies positive scalar curvature.

To prove Lemma \ref{lemma 2.1}, we first establish two identities that will also be used in the proofs of Propositions \ref{prop Yau} and \ref{prop Yau K_max} later.

\begin{lemma}
Let $R \in S^2_B(\wedge^2 V)$ and $\{e_i\}_{i=1}^n$ be an orthonormal basis of $V$. Then 
\begin{eqnarray}\label{identity 2.2}
&& \sum_{1 \leq i \neq j \neq k \neq l \leq n} \left(R_{ikik}+R_{ilil}+R_{jkjk}+R_{jljl}\right) \\ \nonumber
&=& 8(n-3)(n-2) \sum_{1 \leq i < j \leq n} R_{ijij} 
\end{eqnarray}
and 
\begin{eqnarray}\label{identity 2.3}
   && \sum_{1 \leq i \neq j \neq k \neq l \leq n} \left(R_{ijij}+R_{klkl}\right) \\ \nonumber
&=& 4(n-3)(n-2) \sum_{1 \leq i < j \leq n} R_{ijij}  .
\end{eqnarray}
Here, $i\neq j \neq k \neq l$ in the summation means $i,j,k,l$ are mutually distinct. 
\end{lemma}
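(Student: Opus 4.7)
The plan is to prove both identities by a direct combinatorial counting argument. The right-hand side, $\sum_{1\leq i<j\leq n} R_{ijij}$, is the sum over unordered pairs $\{a,b\}$, so in each identity I will fix an unordered pair $\{a,b\} \subset \{1,\dots,n\}$ and count how many ordered four-tuples $(i,j,k,l)$ of mutually distinct indices contribute a copy of $R_{abab}$ to the left-hand side. The claim then reduces to verifying that this multiplicity equals $4(n-2)(n-3)$ for \eqref{identity 2.3} and $8(n-2)(n-3)$ for \eqref{identity 2.2}.

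For \eqref{identity 2.3}, fix $\{a,b\}$ and look at the term $R_{ijij}$. A copy of $R_{abab}$ is produced exactly when $\{i,j\}=\{a,b\}$, which allows $2$ orderings of $(i,j)$; the remaining pair $(k,l)$ must then be an ordered pair of distinct indices from the $n-2$ elements of $\{1,\dots,n\}\setminus\{a,b\}$, giving $(n-2)(n-3)$ possibilities. This yields $2(n-2)(n-3)$ copies of $R_{abab}$. The term $R_{klkl}$ contributes symmetrically another $2(n-2)(n-3)$ copies, so the total multiplicity is $4(n-2)(n-3)$, proving the identity after summation over $\{a,b\}$.

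For \eqref{identity 2.2}, the same bookkeeping is applied to each of the four summands $R_{ikik}$, $R_{ilil}$, $R_{jkjk}$, $R_{jljl}$. Taking $R_{ikik}$ as a representative case, a copy of $R_{abab}$ arises precisely when $\{i,k\}=\{a,b\}$ ($2$ orderings), and $(j,l)$ ranges over ordered pairs of distinct indices in the complement ($(n-2)(n-3)$ possibilities), contributing $2(n-2)(n-3)$ copies. Each of the other three summands contributes identically by symmetry, so in total $R_{abab}$ appears with multiplicity $8(n-2)(n-3)$, and summing over unordered pairs $\{a,b\}$ yields the claimed identity.

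There is no real obstacle here: the only subtlety is being careful that for each summand on the left the two ``free'' indices range independently over ordered pairs of distinct indices in the complement, so their count is $(n-2)(n-3)$ rather than something involving Kronecker deltas. Once this is observed, both identities follow from the counts above.
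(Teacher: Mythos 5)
Your proof is correct, and it takes a genuinely different route from the paper. The paper proves each identity by iteratively summing out one free index at a time: it first sums over $l$ to reduce the four-index sum to a three-index sum involving partial Ricci traces (for instance rewriting the left-hand side of \eqref{identity 2.2} as $\sum_{1\le i\ne j\ne k\le n}\bigl(R_{ii}+R_{jj}+(n-4)(R_{ikik}+R_{jkjk})-2R_{ijij}\bigr)$), then sums over $k$, then over $j$, landing at a multiple of the scalar curvature $S$. Your argument instead reorganizes the sum directly by double counting: you fix an unordered pair $\{a,b\}$ and compute the coefficient of $R_{abab}$ on the left-hand side by counting the ordered four-tuples $(i,j,k,l)$ that produce it; the two free indices range over ordered pairs of distinct elements of the $(n-2)$-element complement, giving the factor $(n-2)(n-3)$. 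Your bookkeeping is right in both cases, yielding multiplicities $4(n-2)(n-3)$ and $8(n-2)(n-3)$ respectively. The two approaches buy slightly different things: the paper's telescoping method naturally exhibits the intermediate Ricci- and scalar-curvature quantities that recur elsewhere in the paper (for example in the proof of Lemma \ref{lemma 2.1}), while your combinatorial multiplicity count is shorter, avoids the intermediate algebra, and makes the symmetry of the four summands in \eqref{identity 2.2} transparent at a glance.
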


\begin{proof}
To establish \eqref{identity 2.2}, we compute that 
\begin{eqnarray*}
&& \sum_{1 \leq i \neq j \neq k \neq l \leq n} \left(R_{ikik}+R_{ilil}+R_{jkjk}+R_{jljl} \right) \\
&=& \sum_{1 \leq i \neq j \neq k \leq n} \left(R_{ii}+R_{jj} +(n-4)(R_{ikik}+R_{jkjk}) -2R_{ijij} \right) \\
&=& 2(n-3) \sum_{1 \leq i \neq j \leq n}  \left((R_{ii}+R_{jj}) -2 R_{ijij}  \right)\\
&=& 2(n-3) \sum_{1 \leq i \leq n} \left( (n-4)R_{ii} + S \right) \\
&=& 4(n-3)(n-2) S \\
&=& 8(n-3)(n-2) \sum_{1 \leq i < j \leq n} R_{ijij} . 
\end{eqnarray*}

Similarly, one verifies \eqref{identity 2.3} as follows:
\begin{eqnarray*}
&& \sum_{1 \leq i \neq j \neq k \neq l \leq n} \left(R_{ijij}+R_{klkl} \right) \\
&=& \sum_{1 \leq i \neq j \neq k \leq n} \left((n-3)R_{ijij}+R_{kk}-R_{ikik}-R_{jkjk} \right) \\
&=& \sum_{1 \leq i \neq j \leq n}  \left(((n-2)(n-3)+2)R_{ijij}+S -2(R_{ii}+R_{jj})   \right)\\
&=& (n-3)\sum_{1 \leq i \leq n} \left( (n-4)R_{ii} + S \right) \\
&=& 2(n-3)(n-2) S \\
&=& 4(n-3)(n-2) \sum_{1 \leq i < j \leq n} R_{ijij} . 
\end{eqnarray*}

\end{proof}

We now prove Lemma \ref{lemma 2.1}. 
\begin{proof}[Proof of Lemma \ref{lemma 2.1}]
Let $\{e_i\}_{i=1}^n$ be an orthonormal basis of $V$. Since $\{e_i,e_j,e_k,e_l\}$ is an orthonormal four-frame in $V$ for $1\leq i,j,k,l \leq n$ mutually distinct, we have 
\begin{equation*}
    R_{ikik}+R_{ilil}+R_{jkjk}+R_{jljl}-\gamma(R_{ijij}+R_{klkl}) > 0.
\end{equation*}
Using \eqref{identity 2.2} and \eqref{identity 2.3}, one gets
\begin{eqnarray*}\label{identity 2.1}
&& 2(2-\gamma)(n-3)(n-2) S \\ \nonumber
&=& \sum_{1 \leq i \neq j \neq k \neq l \leq n} \left(R_{ikik}+R_{ilil}+R_{jkjk}+R_{jljl}-\gamma(R_{ijij}+R_{klkl}) \right). 
\end{eqnarray*}
It follows that $S>0$ when $\gamma <2$. 
This finishes the proof. 
\end{proof}

\subsection{Complex Projective Spaces}
Let $R^{\mathbb{CP}^m}$ denote the Riemann curvature tensor of the complex projective space $\mathbb{CP}^m$ of complex dimension $m\geq 2$ with the Fubini-Study metric, normalized with constant holomorphic sectional curvature $4$. According to \cite{BK78}, $R^{\mathbb{CP}^m}$ is given by
\begin{eqnarray*}
    R^{\mathbb{CP}^m}(X,Y,Z,W) 
    &=& g(X,Z)g(Y,W)-g(X,W)g(Y,Z) \\
    && + g(JX,Z)g(JY,W)-g(JX,W)g(JY,Z) \\
    && +2g(JX,Y)g(JZ,W)
\end{eqnarray*}
for $X,Y,Z,W \in T_p \mathbb{CP}^m$. 
This implies that, if $X$ and $Y$ are unit vectors with $g(X,Y)=0$, then 
\begin{equation*}
    R^{\mathbb{CP}^m}(X,Y,X,Y)= 1+3 (g(JX,Y))^2.
\end{equation*}
In particular, the sectional curvatures of $R^{\mathbb{CP}^m}$ take values in the interval $[1,4]$. It follows from Lemma \ref{lemma 1/4} that $R^{\mathbb{CP}^m}$ satisfies 
\begin{equation*}
    R^{\mathbb{CP}^m}_{1313} +R^{\mathbb{CP}^m}_{1414}+R^{\mathbb{CP}^m}_{2323}+R^{\mathbb{CP}^m}_{2424} \geq \frac{1}{2}(R^{\mathbb{CP}^m}_{1212}+R^{\mathbb{CP}^m}_{3434})
\end{equation*}
for all orthonormal four-frame $\{e_1,e_2,e_3,e_4\}$. Moreover, one easily sees that for an orthonormal four-frame of the form $\{e_1,e_2=Je_1,e_3,e_4=Je_3\}$, we have the equality
\begin{equation*}
   R^{\mathbb{CP}^m}_{1313} + R^{\mathbb{CP}^m}_{1414} +R^{\mathbb{CP}^m}_{2323} + R^{\mathbb{CP}^m}_{2424} =\frac{1}{2} (R^{\mathbb{CP}^m}_{1212} + R^{\mathbb{CP}^m}_{3434} ).
\end{equation*}
It is this example that inspired the author to formulate the curvature condition \eqref{eq main condition} and investigate it in this paper. 

Similar properties are also satisfied by the quaternion-K\"ahler projective space and the Cayley plane, with their canonical metrics.

\section{Proof of Theorem \ref{thm sphere main}}

We recall the important notion of positive isotropic curvature introduced by Micallef and Moore \cite{MM88}. 
\begin{definition}
$R \in S^2_B(\wedge^2 V)$ is said to have positive isotropic curvature if \begin{equation*}
    R_{1313} +R_{1414} +R_{2323} + R_{2424} -2R_{1234} >0
\end{equation*}
for all orthonormal four-frame $\{e_1, e_2, e_3, e_4\} \subset V$. If $>$ is replaced with $\geq$, then $R$ is said to have nonnegative isotropic curvature. 
\end{definition}

We restate Theorem \ref{thm imply PIC} below and present its proof. 
\begin{proposition}\label{prop main}
Suppose that $R \in S^2_B(\wedge^2 V)$ satisfies 
\begin{equation}\label{eq 3.10}
    R_{1313} +R_{1414} +R_{2323} + R_{2424} > \frac{1}{2}\left(R_{1212} + R_{3434}\right)
\end{equation}
for all orthonormal four-frame $\{e_1, e_2, e_3, e_4\} \subset V$. Then $R$ has positive isotropic curvature. If equality is allowed in \eqref{eq 3.10}, then $R$ has nonnegative isotropic curvature. 
\end{proposition}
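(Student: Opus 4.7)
The plan is to deduce positive isotropic curvature from the hypothesis by applying it to two cleverly chosen rotated orthonormal four-frames and combining the resulting inequalities. Fix an orthonormal four-frame $\{e_1,e_2,e_3,e_4\}\subset V$ and abbreviate
\[
L := R_{1313}+R_{1414}+R_{2323}+R_{2424}, \qquad M := R_{1212}+R_{3434},
\]
so the hypothesis for the original frame reads $L > M/2$, and the target PIC inequality is $L > 2R_{1234}$. Since the hypothesis alone offers no direct control on $R_{1234}$, the strategy is to generate such information by rotating the frame: after a $\mathsf{SO}(4)$-rotation the new sectional curvatures develop $R_{1234}$ as a cross term, which is then visible inside the hypothesis.

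Concretely, I would apply the hypothesis to the two $\pi/4$-rotated four-frames
\begin{align*}
\mathcal{F}_1 &= \bigl\{\tfrac{e_1+e_3}{\sqrt{2}},\; \tfrac{e_2+e_4}{\sqrt{2}},\; \tfrac{-e_1+e_3}{\sqrt{2}},\; \tfrac{-e_2+e_4}{\sqrt{2}}\bigr\}, \\
\mathcal{F}_2 &= \bigl\{\tfrac{e_1-e_4}{\sqrt{2}},\; \tfrac{e_2+e_3}{\sqrt{2}},\; \tfrac{-e_2+e_3}{\sqrt{2}},\; \tfrac{e_1+e_4}{\sqrt{2}}\bigr\}.
\end{align*}
For each, I would expand the six sectional curvatures via the bilinear action of $R\in S^2_B(\wedge^2 V)$ on the wedge products $\tilde{e}_i\wedge\tilde{e}_j$, and use the first Bianchi identity $R_{1234}=R_{1324}+R_{1432}$ to reduce all four-index components to the two independent scalars $R_{1234}$ and $R_{1324}$. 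A direct computation will show that the hypothesis applied to $\mathcal{F}_1$ and $\mathcal{F}_2$ takes the form
\begin{align*}
R_{1313}+R_{2424}+\tfrac14(R_{1212}+R_{1414}+R_{2323}+R_{3434})-3R_{1234}+\tfrac32 R_{1324} &> 0, \\
R_{1414}+R_{2323}+\tfrac14(R_{1212}+R_{1313}+R_{2424}+R_{3434})-\tfrac32 R_{1234}-\tfrac32 R_{1324} &> 0.
\end{align*}

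The decisive observation is that the $R_{1324}$-coefficients in these two inequalities are equal and opposite, so summing them eliminates $R_{1324}$ and produces $\tfrac54 L + \tfrac12 M > \tfrac92 R_{1234}$, equivalently $2R_{1234} < (5L+2M)/9$. Combined with the original hypothesis $L > M/2$ (which rearranges to $5L+2M < 9L$), this gives $2R_{1234} < L$, proving PIC. The nonnegative version is obtained by replacing every $>$ with $\geq$ verbatim in the same argument; a sanity check against $\mathbb{CP}^m$ with the $J$-invariant frame $\{e_1, Je_1, e_3, Je_3\}$ confirms that $\mathcal{F}_1, \mathcal{F}_2$, the summed inequality, and the resulting PIC are all simultaneously saturated.

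The main obstacle is identifying this particular pair of rotations: one must arrange that (i) each application of the hypothesis produces a useful $R_{1234}$-contribution, and (ii) the auxiliary $R_{1324}$-contributions that inevitably appear cancel upon summation. The rotations $\mathcal{F}_1$ and $\mathcal{F}_2$ are the $\pi/4$-exponentials of the two inequivalent orthogonal almost complex structures on $\mathrm{span}(e_1,\ldots,e_4)$, one self-dual and one anti-self-dual in $\mathfrak{so}(4)$; this structural pairing is what forces the $R_{1324}$-cancellation, and without it the cancellation would be very hard to discover by brute search.
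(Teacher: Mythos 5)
Your proof is correct and is in essence the paper's own argument: your frame $\mathcal{F}_2$ is, after relabeling indices, exactly the paper's rotated frame $\{e_1',e_2',e_3',e_4'\}$ built from $e_1\pm e_4$ and $e_2\pm e_3$, while your $\mathcal{F}_1$ coincides (up to sign flips of two basis vectors, which leave every sectional curvature unchanged) with the frame the paper obtains by first substituting $\{e_1,e_2,-e_4,e_3\}$ and then rotating. The paper just adds all three resulting inequalities with unit weight to get $9L-18R_{1234}>0$ directly, whereas you add the two rotated ones and then feed in the original as a bound on $M$; the net linear combination of input inequalities is identical.
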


\begin{proof}
Let $\{e_1, e_2, e_3, e_4\}$ be an arbitrary orthonormal four-frame in $V$. By assumption, we have 
\begin{equation}\label{eq 3.4}
    4 \left(R_{1313} +R_{1414} +R_{2323} + R_{2424}\right) -2 \left(R_{1212} + R_{3434}\right) >0. 
\end{equation}
Set 
\begin{align*}
    e_1' &= \tfrac{1}{\sqrt{2}} (e_1+e_4), \\
    e_2' &= \tfrac{1}{\sqrt{2}} (e_2-e_3), \\
    e_3' &= \tfrac{1}{\sqrt{2}} (e_1-e_4), \\
    e_4' &= \tfrac{1}{\sqrt{2}} (e_2+e_3).
\end{align*}
One verifies that $\{e_1', e_2', e_3', e_4'\}$ is also an orthonormal four-frame in $V$. The assumption then implies 
\begin{equation}\label{eq 3.5}
    4 \left( R'_{1313} +R'_{1414} +R'_{2323} + R'_{2424} \right) - 2 \left(R'_{1212} + R'_{3434}\right) >0,
\end{equation}
where 
$$R'_{ijkl}:=R(e_i',e_j',e_k',e_l').$$ 
By direct calculations, we have
\begin{align*}
4R'_{1313} &= R(e_1+e_4,e_1-e_4, e_1+e_4,e_1-e_4)= 4R_{1414} , \\
4R'_{2424}&= R(e_2-e_3,e_2+e_3, e_2-e_3,e_2+e_3)= 4R_{2323} ,
\end{align*}
\begin{eqnarray*}
    && 4R'_{1414} +4R'_{2323}  \\
    &=& R(e_1+e_4,e_2+e_3, e_1+e_4,e_2+e_3) \\
    && +R(e_1-e_4,e_2-e_3, e_1-e_4,e_2-e_3) \\
    &=& 2\left(R_{1212}+R_{1313}+R_{2424}+R_{3434}+2R_{1243}+2R_{1342} \right), 
\end{eqnarray*}
and 
\begin{eqnarray*}
    && 4R'_{1212} +4R'_{3434} \\
    &=& R(e_1+e_4,e_2-e_3, e_1+e_4,e_2-e_3) \\
    && +R(e_1-e_4,e_2+e_3, e_1-e_4,e_2+e_3) \\
    &=& 2\left(R_{1212}+R_{1313}+R_{2424}+R_{3434}-2R_{1243}-2R_{1342} \right). 
\end{eqnarray*}
Substituting the above identities into \eqref{eq 3.5} produces
\begin{eqnarray}\label{eq 3.6}
0 &<& 4\left( R_{1414}+R_{2323} \right) + 6R_{1243} +6R_{1342}  \\ \nonumber
&& + R_{1212} +R_{1313} +R_{2424} +R_{3434}.
\end{eqnarray}
We then replace $\{e_1, e_2, e_3, e_4\}$ by $\{e_1, e_2, -e_4, e_3\}$ in the above argument to obtain
\begin{eqnarray}\label{eq 3.7}
0 & < & 4\left( R_{1313}+R_{2424} \right)  -6R_{1234} -6R_{1432} \\ \nonumber
&& +R_{1212} +R_{1414} +R_{2323} +R_{3434}. 
\end{eqnarray}
Finally, adding \eqref{eq 3.4}, \eqref{eq 3.6}, and \eqref{eq 3.7} together yields
\begin{eqnarray*}
    0 &<& 9(R_{1313}+R_{1414}+R_{2323}+R_{2424}) -12 R_{1234} +6 R_{1342} +6R_{1423} \\
    &=& 9(R_{1313}+R_{1414}+R_{2323}+R_{2424}) -18 R_{1234},
\end{eqnarray*}
where, in getting the last equality, we have used the first Bianchi identity
$$R_{1342}+R_{1423} =-R_{1234}.$$ 

Since the orthonormal four-frame $\{e_1, e_2, e_3, e_4\}$ is arbitrary, we conclude that $R$ has positive isotropic curvature. 
The statement for nonnegative isotropic curvature can be proved similarly. 
\end{proof}

Next, we prove Theorem \ref{thm sphere main}. 
\begin{proof}[Proof of Theorem \ref{thm sphere main}]
Since $(M^n,g)$ has finite fundamental group, its universal cover $(\widetilde{M}, \tilde{g})$ is a closed and simply connected manifold. Moreover, $(\widetilde{M}, \tilde{g})$ has positive isotropic curvature, in view of Proposition \ref{prop main}. Invoking the sphere theorem of Micallef and Moore \cite{MM88}, we conclude that $\widetilde{M}$ is homeomorphic to $\mathbb{S}^n$. 

The homeomorphism can be improved to diffeomorphism for $n=4$ using Hamilton's classification of four-manifolds with positive isotropic curvature \cite{Hamilton97} (see also \cite{CTZ12}) and for $n\geq 12$ using Brendle's recent work \cite{Brendle19} (see also \cite{Huang23}). 
\end{proof}

\section{Proofs of Theorems \ref{thm yau} and \ref{thm Yau K_max}}

Theorem \ref{thm yau} follows from Theorem \ref{thm sphere main} and the following proposition. 
\begin{proposition}\label{prop Yau}
Suppose $R \in S^2_B(\wedge^2 V)$ satisfies
\begin{equation*}
    R_{1313}+R_{1414}+R_{2323}+R_{2424} > \frac{4n(n-1)}{n^2-n+12} S_0
\end{equation*}
for all orthonormal four-frame $\{e_1, e_2, e_3, e_4\} \subset V$. 
Then 
\begin{equation*}
 R_{1313} +R_{1414} +R_{2323} + R_{2424} > \frac{1}{2} \left(R_{1212}+R_{3434}\right)  
\end{equation*}
for all orthonormal four-frame $\{e_1, e_2, e_3, e_4\} \subset V$. 
\end{proposition}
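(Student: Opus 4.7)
Fix an orthonormal four-frame $\{e_1,e_2,e_3,e_4\}\subset V$ and set $A:=R_{1313}+R_{1414}+R_{2323}+R_{2424}$, $B:=R_{1212}+R_{3434}$; the goal is to prove $2A>B$. Abbreviate $c_n:=\frac{4n(n-1)}{n^2-n+12}$, so that the hypothesis reads $A_{\mathrm{frame}}>c_n S_0$ at every orthonormal four-frame. The case $n=4$ is immediate from identity \eqref{identity 4D}, which says $A+B=6 S_0$: combined with $A>c_4 S_0=2 S_0$, this forces $B=6S_0-A<4S_0<2A$.

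For $n\ge 5$, the plan is to extend $\{e_1,\dots,e_4\}$ to an orthonormal basis $\{e_1,\dots,e_n\}$, introduce the partial traces $P_a:=\sum_{i=5}^n R_{aiai}$ and $\widetilde S:=\sum_{5\le i<j\le n} R_{ijij}$, and exploit the scalar curvature identity $\tfrac{n(n-1)}{2} S_0=A+B+\sum_{a=1}^4 P_a+\widetilde S$, which follows from $S=2\sum_{i<j}R_{ijij}$. I would then apply the hypothesis to three families of four-frames: (i) the fixed frame $\{e_1,\dots,e_4\}$ itself with the splitting $\{\{1,2\},\{3,4\}\}$, giving the single inequality $A>c_n S_0$; (ii) for each $\{a,b\}\in\{\{1,2\},\{3,4\}\}$, the four-frames $\{e_a,e_b,e_i,e_j\}$ with $\{i,j\}\subset\{1,\dots,n\}\setminus\{a,b\}$ and the splitting pairing $\{a,b\}$ together, which (following the same bookkeeping as in the proof of Lemma \ref{lemma 2.1}) sum and simplify to $A+P_a+P_b>\tfrac{n-2}{2} c_n S_0$; (iii) for each $a\in\{1,2\}$, $b\in\{3,4\}$, and $k,l\ge 5$ distinct, the four-frames $\{e_a,e_b,e_k,e_l\}$ with splittings $\{\{a,k\},\{b,l\}\}$ or $\{\{a,l\},\{b,k\}\}$, which place $R_{kl}$ into an A-slot and supply the needed control on the restricted scalar curvature $\widetilde S$.

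I then plan to take a specific non-negative linear combination of the inequalities in (i)--(iii) so that the net coefficient of $R_{1212}$ and of $R_{3434}$ on the left is $-1$, the net coefficient of each of $R_{1313},R_{1414},R_{2323},R_{2424}$ is $+2$, and every other sectional curvature together with the partial sums $P_a$ and $\widetilde S$ cancel by means of the scalar curvature identity above. The value $c_n=\frac{4n(n-1)}{n^2-n+12}$ is precisely the threshold at which such a non-negative combination is feasible; its sharpness is witnessed by $\mathbb{CP}^2$ with the Fubini--Study metric, where at $n=4$ and at the frame $\{e_1,Je_1,e_3,Je_3\}$ both the hypothesis and the conclusion reduce to equalities. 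The main obstacle will be the combinatorial bookkeeping required to exhibit the non-negative weights: as $n$ grows one must additionally include hypothesis inequalities for four-frames involving three or four indices from $\{5,\dots,n\}$ in order to absorb the contribution of $\widetilde S$ in the scalar curvature identity, and the feasibility of the resulting linear system is exactly encoded in the specific form of $c_n$.
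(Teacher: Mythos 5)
Your outline tracks the paper's proof closely: the $n=4$ case via identity \eqref{identity 4D} is exactly the paper's argument, and for $n\geq 5$ the paper likewise extends $\{e_1,\dots,e_4\}$ to a basis, decomposes the scalar curvature as $\tfrac{n(n-1)}{2}S_0 = A + B + \sum_a P_a + \widetilde S$, and sums the hypothesis over families of four-frames. Your derived inequality $A + P_a + P_b > \tfrac{n-2}{2}c_n S_0$ is correct, and you correctly anticipate that four-frames lying entirely in $\spn\{e_5,\dots,e_n\}$ are needed once $n$ is large enough that $\widetilde S$ dominates. You also correctly note that $c_n$ makes the scheme tight.

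The issue is that the part you defer — exhibiting the nonnegative weights — is essentially all of the proof. The paper does not merely assert feasibility; it establishes the key upper bound $B < 2c_n S_0$ (which, combined with $A > c_n S_0$, gives $2A > B$) through explicit work: for $n\geq 8$ it lower-bounds $A$, $\sum_a P_a$, and $\widetilde S$ separately, using identity \eqref{identity 2.2} restricted to $\spn\{e_5,\dots,e_n\}$ to handle $\widetilde S$, and for $n=5,6,7$ (where \eqref{identity 2.2} on the complementary subspace is unavailable) it produces ad hoc decompositions — e.g.\ eleven explicit brackets with weights $\tfrac34$ and $\tfrac18$ in the $n=6$ case. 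Your appeal to ``feasibility of the resulting linear system being encoded in $c_n$'' is a restatement of the claim, not a verification, and in particular the low-dimensional cases $5\leq n\leq 7$ need the dedicated constructions that the paper carries out. (Your sharpness assertion for general $n$ is also not something the paper claims or proves; it only discusses the $n=4$ equality case via $\mathbb{CP}^2$.) So: right approach, same as the paper's, but the actual bookkeeping that you flag as ``the main obstacle'' is the content of the proof and must be carried out.
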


\begin{proof}[Proof of Proposition \ref{prop Yau}]
We may assume $n\geq 5$, since the assumption and the conclusion coincide when $n=4$ due to the identity \eqref{identity 4D}. 

Let $\{e_1,e_2,e_3,e_4\}$ be an arbitrary orthonormal four-frame in $V$ and we extend it to an orthonormal basis $\{e_i\}_{i=1}^n$ of $V$. 
For $1\leq i,j,k,l \leq n$ that are mutually distinct, $\{e_i,e_j,e_k,e_l\}$ is an orthonormal four-frame in $V$, and the assumption implies 
\begin{equation}\label{eq yau S2}
    R_{ikik}+R_{ilil}+R_{jkjk}+R_{jljl} > 4\eta_n S_0,
\end{equation}
where 
$$\eta_n=\frac{n(n-1)}{n^2-n+12}.$$
To prove Proposition \ref{prop Yau}, it suffices to establish the upper bound 
\begin{equation}\label{eq upper bound needed}
    R_{1212}+R_{3434} < 8\eta_n S_0,
\end{equation} 
in view of 
$$R_{1313}+R_{1414}+R_{2323}+R_{2424} \geq 4\eta_n S_0.$$  
To prove \eqref{eq upper bound needed}, we shall make use of the identity
\begin{equation*}\label{eq scalar}
    \frac{1}{2}n(n-1)S_0 =\sum_{1\leq i < j \leq n} R_{ijij}. 
\end{equation*}

We begin with the $n=5$ case. Notice that
\begin{eqnarray*}
    && 20S_0 -2(R_{1212}+R_{3434}) \\
    &=& (R_{1313} +R_{1414} +R_{5353} +R_{5454}) \\
    &&+(R_{2323} +R_{2424} +R_{5353} +R_{5454})  \\
    &&  +(R_{1313} +R_{1515} +R_{2323} +R_{2525}) \\
    && +(R_{1414} +R_{1515} +R_{2424} +R_{2525}) \\
    & > & 16 \eta_5 S_0,
\end{eqnarray*}
where we have used in the inequality step that each bracket containing four terms is greater than $4\eta_n S_0$ due to \eqref{eq yau S2}. 
It follows that 
\begin{equation*}
    R_{1212}+R_{3434} < 10S_0 -8\eta_5 S_0 =8 \eta_5 S_0,
\end{equation*}
as desired. 

For $n=6$, we observe that 
\begin{eqnarray*}
 &&   15S_0 -(R_{1212}+R_{3434} ) \\ 
&=& \tfrac{3}{4} (R_{1313}+R_{2424}+R_{1414}+R_{2323} ) \\
&& +\tfrac{3}{4} (R_{1515}+R_{1616}+R_{2525}+R_{2626} ) \\
&& +\tfrac{3}{4} (R_{3535}+R_{3636}+R_{4545}+R_{4646} ) \\ 
&&+\tfrac{1}{8} (R_{1313}+R_{1616}+R_{5353}+R_{5656} )  \\
&&+ \tfrac{1}{8} (R_{1414} +R_{1616} +R_{5454} +R_{5656} ) \\
&& + \tfrac{1}{8} (R_{2323} +R_{2626} +R_{5353} +R_{5656} )  \\
&&+ \tfrac{1}{8} (R_{2424} +R_{2626} +R_{5454} +R_{5656} ) \\
&& + \tfrac{1}{8} (R_{3131} +R_{3636} +R_{5151} +R_{5656} )  \\
&& + \tfrac{1}{8} (R_{4141} +R_{4646} +R_{5151} +R_{5656} ) \\
&& + \tfrac{1}{8} (R_{3232} +R_{3636} +R_{5252} +R_{5656} ) \\
&&+ \tfrac{1}{8} (R_{4242} +R_{4646} +R_{5252} +R_{5656} ) \\
&> & 13 \eta_6 S_0.
\end{eqnarray*}
Again, we have used in the last step that each bracket containing four terms is greater than $4\eta_n S_0$ by \eqref{eq yau S2}. 
From this, we deduce 
\begin{equation*}
    R_{1212}+R_{3434} < 15 S_0-13\eta_6 S_0= 8\eta_6 S_0,
\end{equation*}
and finish the $n=6$ case.

When $n=7$, one verifies (a bit tedious but elementary) that 
\begin{eqnarray*}
&& 21S_0 -(R_{1212}+R_{3434}) \\
&=& \frac{1}{24} \sum_{j=3}^4 \sum_{p=5}^7 (R_{1j1j} +R_{1p1p} +R_{2j2j} +R_{2p2p} ) \\
&& +\frac{1}{24} \sum_{i=1}^2 \sum_{p=5}^7 (R_{i3i3} +R_{i4i4} +R_{3p3p} +R_{4p4p} ) \\
&& + \frac{1}{8} \sum_{i=1}^2 \sum_{j=3}^4 \sum_{5 \leq p \neq q \leq 7} (R_{ijij} +R_{iqiq} +R_{jpjp} +R_{pqpq} ) \\
&& + \frac{5}{48} \sum_{i=1}^2 \sum_{j=3}^4 \sum_{5 \leq p < q \leq 7} (R_{ipip} +R_{iqiq} +R_{jpjp} +R_{jqjq} ) \\
\end{eqnarray*}
As before, each bracket containing four terms is greater than $4\eta_n S_0$. Therefore, we have
\begin{equation*}
R_{1212}+R_{3434} < 21 S_0-19\eta_7 S_0 = 8\eta_7 S_0. 
\end{equation*}
This completes the $n=7$ case. 

Finally, we treat all $n\geq 8$. By \eqref{identity 2.2} and \eqref{eq yau S2}, we have 
\begin{eqnarray*}
&& 8(n-6)(n-7)\sum_{5\leq p < q \leq n} R_{pqpq} \\
&=& \sum_{5 \leq i\neq j \neq k \neq l \leq n} (R_{ikik}+R_{ilil}+R_{jkjk}+R_{jljl}) \\
&>& 4(n-7)(n-6)(n-5)(n-4)\eta_n S_0.
\end{eqnarray*}
We also observe that, for any $n\geq 6$, 
\begin{eqnarray*}
&& (n-5) \sum_{i=1}^4\sum_{j=5}^n R_{ijij} \\
&=& \sum_{5 \leq p < q \leq n} (R_{1p1p}+R_{1q1q}+R_{2p2p}+R_{2q2q} ) \\
&& +\sum_{5 \leq p < q \leq n} (R_{3p3p}+R_{3q3q}+R_{4p4p}+R_{4q4q}) \\
&>& 4(n-5)(n-4)\eta_n S_0. 
\end{eqnarray*}
Using the above two estimates, we obtain
\begin{eqnarray*}
   && R_{1212}+R_{3434} \\
   &=& \tfrac{1}{2}n(n-1)S_0 - (R_{1313} +R_{1414} +R_{2323} + R_{2424}) \\
   && -\sum_{i=1}^4\sum_{j=5}^n R_{ijij} -\sum_{5 \leq p < q \leq n} R_{pqpq} \\
    &<& \tfrac{1}{2}n(n-1)S_0 -4\eta_nS_0 -4(n-4)\eta_n S_0 \\
    && -\tfrac{1}{2}(n-5)(n-4)\eta_n S_0\\
    &=& 8\eta_n S_0.
\end{eqnarray*}
This establishes \eqref{eq upper bound needed} for all $n\geq 8$. 

We have completed the proof. 

\end{proof}

Theorem \ref{thm Yau K_max} is a consequence of Theorem \ref{thm sphere main} and the following proposition. 

\begin{proposition}\label{prop Yau K_max}
Suppose $R \in S^2_B(\wedge^2 V)$ satisfies
\begin{equation*}
    R_{1212}+R_{3434}< 2\gamma_n S_0
\end{equation*}
for all orthonormal four-frame $\{e_1, e_2, e_3, e_4\} \subset V$, where $\gamma_n$ is defined in \eqref{eq gamma_n def}. 
Then 
\begin{equation*}
 R_{1313} +R_{1414} +R_{2323} + R_{2424} > \frac{1}{2} \left(R_{1212}+R_{3434}\right)  
\end{equation*}
for all orthonormal four-frame $\{e_1, e_2, e_3, e_4\} \subset V$.  
\end{proposition}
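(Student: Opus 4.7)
The plan is to mirror the structure of the proof of Proposition~\ref{prop Yau}: handle $n=4$ via \eqref{identity 4D}, isolate the exceptional case $n=5$, and treat $n\geq 6$ by an averaging argument dictated by the defining identity of $\gamma_n$. First I would extend $\{e_1,e_2,e_3,e_4\}$ to an orthonormal basis $\{e_i\}_{i=1}^n$, set $A = \sum_{i\leq 4,\,j\geq 5}R_{ijij}$ and $C = \sum_{5\leq p<q\leq n}R_{pqpq}$, and use $\sum_{i<j}R_{ijij}=\tfrac{n(n-1)}{2}S_0$ to rewrite the desired inequality as
\begin{equation*}
n(n-1)S_0 \;>\; 3(R_{1212}+R_{3434}) + 2A + 2C.
\end{equation*}
For $n=4$, where $A=C=0$, this collapses to $R_{1212}+R_{3434}<4S_0=2\gamma_4 S_0$, which is the assumption.

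For $n\geq 6$, the key is the identity $(n^2-n-6)\gamma_n=n(n-1)$, built directly into the definition of $\gamma_n$. This suggests writing $3(R_{1212}+R_{3434})+2A+2C$ as a non-negatively weighted sum of the assumption inequalities $R_{abab}+R_{cdcd}<2\gamma_n S_0$ with total weight $(n^2-n-6)/2$, so that the right-hand side is exactly $n(n-1)S_0$. To keep coefficient zero on the ``target-zero'' pairs $\{1,3\},\{1,4\},\{2,3\},\{2,4\}$, I would restrict to inequalities in which both pairs belong to
\begin{equation*}
X=\{\{1,2\},\{3,4\}\},\quad Y=\{\{i,j\}:i\leq 4,\,j\geq 5\},\quad Z=\{\{p,q\}:5\leq p<q\},
\end{equation*}
with symmetric multipliers $\alpha,\beta,\gamma',\delta,\epsilon,\zeta$ attached to the respective types $XX,XY,XZ,YY,YZ,ZZ$. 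Matching coefficients reduces to the linear system
\begin{align*}
\alpha + 2(n-4)\beta + \tbinom{n-4}{2}\gamma' &= 3,\\
\beta + 3(n-5)\delta + \tbinom{n-5}{2}\epsilon &= 2,\\
2\gamma' + 4(n-6)\epsilon + \tbinom{n-6}{2}\zeta &= 2,
\end{align*}
which admits explicit non-negative solutions: $(0,\tfrac12,1,\tfrac12,0,0)$ for $n=6$, $(0,0,1,\tfrac13,0,0)$ for $n=7$, and $\alpha=\beta=\epsilon=0$, $\gamma'=\tfrac{6}{(n-4)(n-5)}$, $\delta=\tfrac{2}{3(n-5)}$, $\zeta=\tfrac{(n-4)(n-5)-6}{\binom{n-4}{2}\binom{n-6}{2}}$ for $n\geq 8$. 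Since the individual assumption inequalities are strict, so is the resulting bound.

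The case $n=5$ is genuinely exceptional since $\gamma_5=\tfrac{20}{17}\ne\tfrac{n(n-1)}{n^2-n-6}=\tfrac{10}{7}$, making the linear program infeasible. Instead I would derive two complementary lower bounds on $q:=R_{1313}+R_{1414}+R_{2323}+R_{2424}$ in terms of $t:=R_{1212}+R_{3434}$. Summing $R_{i5i5}+R_{3434}<2\gamma_5 S_0$ for $i=1,2$ together with $R_{i5i5}+R_{1212}<2\gamma_5 S_0$ for $i=3,4$ bounds $A$ linearly in $t$ and yields Bound~A, $q>\tfrac{10}{17}S_0+t$. Summing instead $R_{i5i5}+R_{kl}<2\gamma_5 S_0$ over $i\in\{1,\ldots,4\}$ and $\{k,l\}\subset\{1,\ldots,4\}\setminus\{i\}$ (deliberately engaging the forbidden pairs, whose contributions cancel after substituting $A=10S_0-t-q$) yields Bound~B, $q>\tfrac{30}{17}S_0-t$. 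Averaging the two kills the $t$-dependence and produces $q>\tfrac{20}{17}S_0=\gamma_5 S_0>\tfrac12 t$, where the last inequality is the assumption. The main obstacle will be verifying uniform non-negativity of the multipliers in the $n\geq 8$ regime (a direct algebraic check) and exhibiting the precise self-cancelling averaging in the $n=5$ case.
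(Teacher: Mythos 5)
Your proof is correct. The reformulation $q>\tfrac12 t \iff n(n-1)S_0 > 3t+2A+2C$ (with $t=R_{1212}+R_{3434}$, $A=\sum_{i\le4,j\ge5}R_{ijij}$, $C=\sum_{5\le p<q}R_{pqpq}$) is right, the three coefficient-matching equations are set up correctly (I verified the disjointness counts $1$, $2(n-4)$, $\tbinom{n-4}{2}$ for $X$; $1$, $3(n-5)$, $\tbinom{n-5}{2}$ for $Y$; $2$, $4(n-6)$, $\tbinom{n-6}{2}$ for $Z$), and the exhibited solutions satisfy them with nonnegative entries for every $n\geq 6$ --- in particular $\zeta$'s numerator $(n-4)(n-5)-6\ge 6>0$ once $n\geq 8$, and the $\tbinom{1}{2}=0$ convention handles $n=7$ cleanly. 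The $n=5$ averaging is also correct: Bound A gives $q>\tfrac{10}{17}S_0+t$ after $A+2t<8\gamma_5S_0$, Bound B gives $q>\tfrac{30}{17}S_0-t$ after $3A+2t+2q<24\gamma_5S_0$ and the substitution $A=10S_0-t-q$, and averaging yields $q>\gamma_5 S_0>\tfrac12 t$.

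The paper's own proof is structurally close but organized differently: it writes explicit ad hoc decompositions for $n=5,6,7$ and, for $n\geq 8$, bounds $C$ via the identity \eqref{identity 2.3} restricted to indices $\geq 5$ and bounds $A$ by a direct sum, then shows $q>\gamma_n S_0$ and invokes $t<2\gamma_n S_0$ a second time. Your linear-programming framework with symmetric multipliers by type is a cleaner conceptual packaging: it explains \emph{why} the pinching constant $\gamma_n=\tfrac{n(n-1)}{n^2-n-6}$ (equivalently total weight $\tfrac{n^2-n-6}{2}$) is exactly the threshold for $n\ne 5$, and unifies $n=6,7$ with $n\geq 8$ as instances of one feasibility problem. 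For $n=5$ the paper uses a single length-$8$ decomposition that mixes the ``forbidden'' pairs $R_{1313},R_{1414},R_{2323},R_{2424}$ into the summands, which is in effect a particular convex combination of your Bounds A and B; your two-bound averaging makes the cancellation mechanism explicit. Both routes give the same conclusion; yours trades some explicitness for a clearer account of where the constants come from.
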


\begin{proof}
The $n=4$ case follows from the identity \eqref{identity 4D}. We assume $n\geq 5$ below.

Let $\{e_1,e_2,e_3,e_4\}$ be an arbitrary orthonormal four-frame in $V$ and we extend it to an orthonormal basis $\{e_i\}_{i=1}^n$ of $V$. The assumption implies that 
\begin{equation}\label{eq 5.05}
    R_{ijij}+R_{klkl} < 2\gamma_n S_0
\end{equation} whenever $1\leq i,j,k,l \leq n$ are mutually distinct.

Using \eqref{identity 2.3} and \eqref{eq 5.05}, we have, for any $n\geq 8$, 
\begin{eqnarray}\label{eq 5.1}
    \sum_{5\leq i < j \leq n} R_{ijij} 
   &=& \frac{1}{4(n-7)(n-6)}\sum_{5\leq i \neq j \neq k \neq l \leq n} (R_{ijij}+R_{klkl}) \\ \nonumber
   &<& \frac{1}{2}(n-5)(n-4)\gamma_n S_0. 
\end{eqnarray}
One verifies that, for $n\geq 6$, we have
\begin{eqnarray*}
    && (n-5)\sum_{i=1}^4 \sum_{j=5}^n R_{ijij} \\
    &=& \sum_{5 \leq p < q \leq n} \left((R_{1p1p}+R_{2q2q})+(R_{3p3p}+R_{4q4q}) \right) \\
    && + \sum_{5 \leq p < q \leq n} \left((R_{1q1q}+R_{2p2p})+(R_{3q3q}+R_{4p4p}) \right) .
\end{eqnarray*}
This, together with \eqref{eq 5.05}, yields
\begin{equation}\label{eq 5.2}
   \sum_{i=1}^4 \sum_{j=5}^n R_{ijij}  <  4(n-4)\gamma_n S_0.
\end{equation}
Therefore, we obtain, when $n\geq 8$, that
\begin{eqnarray*}
   && R_{1313}+R_{1414}+R_{2323}+R_{2424}-\tfrac{1}{2}n(n-1)S_0  \\
&=&  -(R_{1212}+R_{3434}) - \sum_{i=1}^4 \sum_{j=5}^n R_{ijij} - \sum_{5\leq i < j \leq n} R_{ijij} \\
&> & -2\gamma_n S_0 - 4(n-4)\gamma_n S_0 - \tfrac{1}{2}(n-5)(n-4)\gamma_n S_0 \\
&=&  -\frac{1}{2}(n^2-n-8)\gamma_n S_0.
\end{eqnarray*}
In view of \eqref{eq gamma_n def}, this becomes 
\begin{equation*}
    R_{1313}+R_{1414}+R_{2323}+R_{2424} > \gamma_n S_0 > \tfrac{1}{2} (R_{1212}+R_{3434}),
\end{equation*}
as desired. This completes the proof for $n\geq 8$. 

Next, we deal with the cases $5\leq n \leq 7$ separately. 
For $n=7$, we observe that
\begin{eqnarray*}
&& (R_{1313}+R_{1414}+R_{2323}+R_{2424} )-\tfrac{1}{2}(R_{1212}+R_{3434})\\
   &=& 21S_0 -(R_{1212}+R_{5656}) -(R_{3434}+R_{5757})  \\
   && -(R_{1515}+R_{2626}) -(R_{4747}+R_{2525}) \\
   && -(R_{3535}+R_{4646}) -(R_{1717}+R_{4545}) \\
   && -(R_{1616}+R_{3737}) -(R_{3636}+R_{2727}) \\
   && -\tfrac{1}{2}(R_{1212}+R_{6767})-\tfrac{1}{2}(R_{3434}+R_{6767}) \\
   &>& 21 S_0 - 18\gamma_7 S_0 \\
   &=& 0,
\end{eqnarray*}
where we have used \eqref{eq 5.05} to obtain the inequality step. 

In a similar fashion, the $n=6$ case follows from
\begin{eqnarray*}
&& R_{1313}+R_{1414}+R_{2323}+R_{2424} -\tfrac{1}{2}(R_{1212}+R_{3434})\\
   &=& 15S_0 -(R_{1212}+R_{3434})   \\
   && -(R_{1515}+R_{2626}) -(R_{1616}+R_{2525}) \\
   && -(R_{3535}+R_{4646}) -(R_{3636}+R_{4545}) \\
   && -\tfrac{1}{2}(R_{1212}+R_{5656}) -\tfrac{1}{2}(R_{3434}+R_{5656})\\
   &>& 15S_0 - 12\gamma_6 S_0 \\
   &=& 0.
\end{eqnarray*}

Finally, for $n=5$, we notice that
\begin{eqnarray*}
    && R_{1313}+R_{1414}+R_{2323}+R_{2424} \\
   &=& 20S_0 -(R_{1212}+R_{3535})-(R_{1212}+R_{4545}) \\
   &&  -(R_{3434}+R_{1515}) -(R_{3434}+R_{2525}) \\ 
   && -(R_{1313}+R_{2525}) -(R_{1414}+R_{3535}) \\
   && -(R_{2323}+R_{4545})-(R_{2424}+R_{1515}) \\
   &>& 20S_0 - 16\gamma_5 S_0 \\
   &=& \gamma_5 S_0. 
\end{eqnarray*}
Since $R_{1212}+R_{3434} < 2\gamma_5 S_0$, we arrive at 
\begin{equation*}
    R_{1313}+R_{1414}+R_{2323}+R_{2424} > \tfrac{1}{2}(R_{1212}+R_{3434}). 
\end{equation*}
The proof is complete. 

\end{proof}

\section{Rigidity}
In this section, we present the proof of Theorem \ref{thm rigidity}. 
We begin with the following lemma, which prevents non-flat manifolds satisfying \eqref{eq main condition weakly} from certain splitting.  
\begin{lemma}\label{lemma non-splitting}
Suppose that $(M^n,g)=(M_1^{k},g_1) \times (M_2^{n-k}, g_2)$, $n\geq 4$ and $2 \leq k\leq \frac{n}{2}$, satisfies 
\begin{equation*}
    R_{1313} +R_{1414} +R_{2323} + R_{2424} \geq \frac{1}{2}\left(R_{1212} + R_{3434}\right)
\end{equation*}
for all orthonormal four-frame $\{e_1, e_2, e_3, e_4\} \subset T_pM$. If both $(M_1,g_1)$ and $(M_2,g_2)$ have nonnegative scalar curvature somewhere, then $(M^n,g)$ is flat. 
\end{lemma}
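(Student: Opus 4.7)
\textbf{Proof plan for Lemma \ref{lemma non-splitting}.}
The strategy is to exploit the product structure by inserting into \eqref{eq main condition weakly} four-frames whose first two vectors lie in $T_{p_1}M_1$ and whose last two lie in $T_{p_2}M_2$, then extract pointwise bounds on the sectional curvatures of the two factors and close the argument with the nonnegative scalar curvature guaranteed by Lemma \ref{lemma 2.1}.

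Concretely, I would first fix $p_1 \in M_1$ and $p_2 \in M_2$, choose unit vectors $e_1, e_2 \in T_{p_1}M_1$ and $e_3, e_4 \in T_{p_2}M_2$ (possible because $k \geq 2$ and $n - k \geq k \geq 2$), and use that in a Riemannian product curvature components with indices drawn from two different factors vanish. Setting $\sigma_1 = \operatorname{span}(e_1, e_2)$ and $\sigma_2 = \operatorname{span}(e_3, e_4)$, one then has $R_{1212} = K_1(\sigma_1; p_1)$, $R_{3434} = K_2(\sigma_2; p_2)$, and $R_{1313} = R_{1414} = R_{2323} = R_{2424} = 0$, so \eqref{eq main condition weakly} degenerates to
\begin{equation*}
K_1(\sigma_1; p_1) + K_2(\sigma_2; p_2) \leq 0
\end{equation*}
for every choice of 2-planes $\sigma_1 \subset T_{p_1}M_1$ and $\sigma_2 \subset T_{p_2}M_2$.

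Next, I would fix $\sigma_2$ and sum this inequality over the $\binom{k}{2}$ coordinate 2-planes of an orthonormal basis of $T_{p_1}M_1$; using $\sum_{i < j} K_1(e_i, e_j; p_1) = \tfrac{1}{2} S_1(p_1)$ this rearranges to
\begin{equation*}
K_2(\sigma_2; p_2) \leq -\frac{S_1(p_1)}{k(k-1)}.
\end{equation*}
By assumption $S_1(p_1) \geq 0$ for some $p_1$, and feeding that point in forces $K_2 \leq 0$ at every $p_2 \in M_2$ and every $\sigma_2$. A symmetric argument (summing over a basis of $T_{p_2}M_2$ with dimension $n - k \geq 2$) gives $K_1 \leq 0$ everywhere on $M_1$.

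Finally, Lemma \ref{lemma 2.1} applied with $\gamma = 1/2 < 2$ says that \eqref{eq main condition weakly} implies $S \geq 0$ on $M$, and on a product $S(p_1, p_2) = S_1(p_1) + S_2(p_2)$. But $K_i \leq 0$ gives $S_i \leq 0$, so we must have $S_1 \equiv 0$ and $S_2 \equiv 0$. Each $S_i$ is twice the sum of nonpositive sectional curvatures in an orthonormal basis, so equality forces every such sectional curvature, hence every sectional curvature of $M_i$, to vanish. Thus both factors are flat, and so is $(M, g)$. The only real obstacle is purely bookkeeping (confirming the mixed curvature components vanish and that the dimension assumption $2 \leq k \leq n/2$ permits both halves of the symmetric argument); no delicate inequality or compactness step is needed.
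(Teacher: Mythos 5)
Your proof is correct, and the skeleton agrees with the paper's: test \eqref{eq main condition weakly} on split four-frames with $e_1,e_2$ tangent to $M_1$ and $e_3,e_4$ tangent to $M_2$, use the vanishing of mixed curvature components in a product to reduce the inequality to $K_1(\sigma_1;p_1)+K_2(\sigma_2;p_2)\leq 0$ for every pair of planes from the two factors, and then close the argument with the nonnegative scalar curvature from Lemma~\ref{lemma 2.1}. The one place where you diverge is the intermediate step. The paper chooses a two-plane $\sigma_i$ that \emph{maximizes} the sectional curvature of $M_i$ (tacitly using that the manifold to which the lemma is applied in Theorem~\ref{thm rigidity}, the closed universal cover, is compact), notes $K_i^{\max}\geq 0$ because $S_i\geq 0$ somewhere, and reads off $K_i^{\max}=0$. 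You instead average the pointwise inequality over the $\binom{k}{2}$ coordinate two-planes of an orthonormal basis of one factor, converting $K_1$ into $\tfrac{1}{2}S_1(p_1)$, and then feed in the point where $S_1\geq 0$; the symmetric sum over the $(n-k)$-dimensional factor needs $n-k\geq 2$, which your hypothesis $2\leq k\leq n/2$ supplies. This trades a short extremal argument for a short averaging one; yours is marginally more robust in that it never invokes the existence of a maximizing plane (hence works without compactness), but within the paper's setting the two are of equal strength and both land on the same endgame: $K_i\leq 0$ on each factor, combined with $S\geq 0$, forces all sectional curvatures to vanish.
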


\begin{proof}
For $i=1,2$, let $\sigma_i \subset T_{p_i}M_i$ be a two-plane that maximizes the sectional curvature on $(M_i,g_i)$. 
Let $\{e_1,e_2\}$ be an orthonormal basis of $\sigma_1$ and $\{e_3,e_4\}$ be an orthonormal basis of $\sigma_2$. 
For the orthonormal four-frame $\{e_1,e_2,e_3,e_4\} \subset T_{(p_1,p_2)}M$, the curvature assumption implies 
\begin{equation*}
R_{1313} +R_{1414} +R_{2323} + R_{2424} \geq \frac{1}{2}\left(R_{1212} + R_{3434}\right).
\end{equation*}
Due to the product structure, we have $R_{1313}=R_{1414}=R_{2323}=R_{2424}=0$. It follows that $R_{1212}+R_{3434} \leq 0$. 
On the other hand, since both $(M_1,g_1)$ and $(M_2,g_2)$ have nonnegative scalar curvature somewhere, we have $R_{1212} \geq 0$ and $R_{3434}\geq 0$. Hence $R_{1212}=R_{3434}=0$. 
Since $R_{1212}$ and $R_{3434}$ maximize the sectional curvature of $(M_1,g_1)$ and $(M_2,g_2)$, respectively, we conclude that both $(M_1,g_1)$ and $(M_2,g_2)$ have nonpositive sectional curvature, and so does $(M,g)$. 
On the other hand, $(M,g)$ has nonnegative scalar curvature by Lemma \ref{lemma 2.1}. Hence, $(M,g)$ is flat.

\end{proof}

We are ready to prove Theorem \ref{thm rigidity}.
\begin{proof}[Proof of Theorem \ref{thm rigidity}]
Denote by $(\widetilde{M}^n,\tilde{g})$ the Riemannian universal cover of $(M^n,g)$. Since $M$ has finite fundamental group, $\widetilde{M}^n$ is also a closed manifold. 
By Proposition \ref{prop main}, $(\widetilde{M}^n,\tilde{g})$ has nonnegative isotropic curvature. 

We first consider the case that $(\widetilde{M}^n,\tilde{g})$ is irreducible. By the classification of closed, simply connected, irreducible Riemannian manifolds with nonnegative isotropic curvature in \cite[Theorem 9.30]{Brendle10book}, we conclude that  $(\widetilde{M}^n,\tilde{g})$ is either homeomorphic to $\mathbb{S}^n$, or K\"ahler and biholomorphic to $\mathbb{CP}^{\frac{n}{2}}$, or isometric to an irreducible compact symmetric space.

We then consider the case $(\widetilde{M}^n,\tilde{g})$ is reducible and use Lemma \ref{lemma non-splitting} to prove that $(\widetilde{M}^n,\tilde{g})$ must be flat in this case. 
Since the de Rham decomposition of $(\widetilde{M}^n,\tilde{g})$ cannot have Euclidean factors, we have two possibilities by \cite[Theorem 3.1]{MW93}:

Case 1: $(\widetilde{M}^n,\tilde{g})$ is isometric to $(N_1^{n_1}, g_1) \times \cdots \times (N_r^{n_r},g_r)$, where $n=n_1+\cdots +n_r$, and for each $1\leq i \leq r$, either $n_i=2$, $N_i=\mathbb{S}^2$, and $g_i$ has nonnegative Gauss curvature, or $n_i \geq 3$, $N_i$ is compact and irreducible, and $g_i$ has nonnegative Ricci curvature. 
In view of Lemma \ref{lemma non-splitting}, $(\widetilde{M}^n,\tilde{g})$ must be flat. Hence \((M,g)\) is flat.

Case 2: $(\widetilde{M}^n,\tilde{g})$ is isometric to $(\Sigma^2,g_1) \times (N^{n-2}, g_2)$, where $\Sigma$ is a surface with Gauss curvature $\kappa$ negative somewhere, and $(N,g_2)$ is a compact, irreducible Riemannian manifold with nonnegative complex sectional curvature. 
If $\kappa$ is nonnegative somewhere on $\Sigma$, then Lemma \ref{lemma non-splitting} implies that $(\tilde{M},\tilde{g})$ is flat.  
Therefore, \(\Sigma\) must have negative Gauss curvature everywhere. By the
Gauss-Bonnet theorem, \(\chi(\Sigma)<0\). On the other hand, since
\(\Sigma\) is a de Rham factor of the simply connected manifold
\(\widetilde M\), it is itself simply connected. Hence, \(\Sigma\) is closed and diffeomorphic to \(\mathbb{S}^2\), so \(\chi(\Sigma)=2\), a contradiction.

The proof is complete. 

\end{proof}

\section*{Conflict of Interest}
The author states that there is no conflict of interest.

\section*{Data Availability Statement}
Data sharing is not applicable to this article as no datasets were generated or analyzed during the current study.

\bibliographystyle{alpha}
\bibliography{ref}

@article {Huang23,
    AUTHOR = {Huang, Hong},
     TITLE = {Classification of compact manifolds with positive isotropic curvature},
   JOURNAL = {arXiv:2305.18154v8},
  FJOURNAL = {},
    VOLUME = {},
      YEAR = {2023},
    NUMBER = {},
     PAGES = {},
      ISSN = {},
   MRCLASS = {},
  MRNUMBER = {},
       DOI = {},
}

@article {Li24New,
    AUTHOR = {Li, Xiaolong},
     TITLE = {New sphere theorems under curvature operator of the second
              kind},
   JOURNAL = {J. Lond. Math. Soc. (2)},
  FJOURNAL = {Journal of the London Mathematical Society. Second Series},
    VOLUME = {112},
      YEAR = {2025},
    NUMBER = {5},
     PAGES = {Paper No. e70356, 30},
      ISSN = {0024-6107,1469-7750},
   MRCLASS = {53C21 (53C20 53C24 53C55)},
  MRNUMBER = {4991457},
       DOI = {10.1112/jlms.70356},
       URL = {https://doi.org/10.1112/jlms.70356},
}

@article {Chen24,
    AUTHOR = {Chen, Zhengnan},
     TITLE = {Manifolds with positive isotropic curvature of dimension at least nine},
   JOURNAL = {arXiv:2410.21078v3},
  FJOURNAL = {},
    VOLUME = {},
      YEAR = {2024},
    NUMBER = {},
     PAGES = {},
      ISSN = {},
   MRCLASS = {},
  MRNUMBER = {},
       DOI = {},
}

@article {CT22,
    AUTHOR = {Cao, Xiaodong and Tran, Hung},
     TITLE = {Four-manifolds of pinched sectional curvature},
   JOURNAL = {Pacific J. Math.},
  FJOURNAL = {Pacific Journal of Mathematics},
    VOLUME = {319},
      YEAR = {2022},
    NUMBER = {1},
     PAGES = {17--38},
      ISSN = {0030-8730,1945-5844},
   MRCLASS = {53C25},
  MRNUMBER = {4475672},
MRREVIEWER = {Neiton\ Pereira\ da Silva},
       DOI = {10.2140/pjm.2022.319.17},
       URL = {https://doi.org/10.2140/pjm.2022.319.17},
}

@incollection {Yau93,
    AUTHOR = {Yau, Shing-Tung},
     TITLE = {Open problems in geometry},
 BOOKTITLE = {Differential geometry: partial differential equations on
              manifolds ({L}os {A}ngeles, {CA}, 1990)},
    SERIES = {Proc. Sympos. Pure Math.},
    VOLUME = {54, Part 1},
     PAGES = {1--28},
 PUBLISHER = {Amer. Math. Soc., Providence, RI},
      YEAR = {1993},
      ISBN = {0-8218-1494-X},
   MRCLASS = {53-02},
  MRNUMBER = {1216573},
       DOI = {10.1090/pspum/054.1/1216573},
       URL = {https://doi.org/10.1090/pspum/054.1/1216573},
}

@article {GX12,
    AUTHOR = {Gu, Juan-Ru and Xu, Hong-Wei},
     TITLE = {The sphere theorems for manifolds with positive scalar
              curvature},
   JOURNAL = {J. Differential Geom.},
  FJOURNAL = {Journal of Differential Geometry},
    VOLUME = {92},
      YEAR = {2012},
    NUMBER = {3},
     PAGES = {507--545},
      ISSN = {0022-040X,1945-743X},
   MRCLASS = {53C40},
  MRNUMBER = {3005061},
MRREVIEWER = {Peihe\ Wang},
       URL = {http://projecteuclid.org/euclid.jdg/1354110198},
}

@article {XG14,
    AUTHOR = {Xu, Hong-wei and Gu, Juan-ru},
     TITLE = {Rigidity of {E}instein manifolds with positive scalar
              curvature},
   JOURNAL = {Math. Ann.},
  FJOURNAL = {Mathematische Annalen},
    VOLUME = {358},
      YEAR = {2014},
    NUMBER = {1-2},
     PAGES = {169--193},
      ISSN = {0025-5831,1432-1807},
   MRCLASS = {53C25 (53C24 53C40)},
  MRNUMBER = {3157995},
MRREVIEWER = {Mehdi\ Rafie-Rad},
       DOI = {10.1007/s00208-013-0957-7},
       URL = {https://doi.org/10.1007/s00208-013-0957-7},
}

@article {CR14,
    AUTHOR = {Costa, Ezio and Ribeiro, Jr., Ernani},
     TITLE = {Four-dimensional compact manifolds with nonnegative
              biorthogonal curvature},
   JOURNAL = {Michigan Math. J.},
  FJOURNAL = {Michigan Mathematical Journal},
    VOLUME = {63},
      YEAR = {2014},
    NUMBER = {4},
     PAGES = {747--761},
      ISSN = {0026-2285,1945-2365},
   MRCLASS = {53B21 (53C20 53C25)},
  MRNUMBER = {3286669},
MRREVIEWER = {Joeri\ Van der Veken},
       DOI = {10.1307/mmj/1417799224},
       URL = {https://doi.org/10.1307/mmj/1417799224},
}

@article {Li24,
    AUTHOR = {Li, Xiaolong},
     TITLE = {Manifolds with nonnegative curvature operator of the second
              kind},
   JOURNAL = {Commun. Contemp. Math.},
  FJOURNAL = {Communications in Contemporary Mathematics},
    VOLUME = {26},
      YEAR = {2024},
    NUMBER = {3},
     PAGES = {Paper No. 2350003},
      ISSN = {0219-1997,1793-6683},
   MRCLASS = {53C20 (53C21 53C24)},
  MRNUMBER = {4719491},
       DOI = {10.1142/S0219199723500037},
       URL = {https://doi.org/10.1142/S0219199723500037},
}

@article {NPW22,
    AUTHOR = {Nienhaus, Jan and Petersen, Peter and Wink, Matthias},
     TITLE = {Betti numbers and the curvature operator of the second kind},
   JOURNAL = {J. Lond. Math. Soc. (2)},
  FJOURNAL = {Journal of the London Mathematical Society. Second Series},
    VOLUME = {108},
      YEAR = {2023},
    NUMBER = {4},
     PAGES = {1642--1668},
      ISSN = {0024-6107,1469-7750},
   MRCLASS = {53C25 (53C15 53C20 53C21)},
  MRNUMBER = {4655274},
       DOI = {10.1112/jlms.12790},
       URL = {https://doi.org/10.1112/jlms.12790},
}

@article {Li22JGA,
    AUTHOR = {Li, Xiaolong},
     TITLE = {Manifolds with {$4\frac 12$}-{P}ositive {C}urvature {O}perator
              of the {S}econd {K}ind},
   JOURNAL = {J. Geom. Anal.},
  FJOURNAL = {Journal of Geometric Analysis},
    VOLUME = {32},
      YEAR = {2022},
    NUMBER = {11},
     PAGES = {281},
      ISSN = {1050-6926},
   MRCLASS = {53C20 (53C21 53C24)},
  MRNUMBER = {4478476},
       DOI = {10.1007/s12220-022-01033-8},
       URL = {https://doi.org/10.1007/s12220-022-01033-8},
}

@article {Berger60,
    AUTHOR = {Berger, M.},
     TITLE = {Les vari\'et\'es {R}iemanniennes {$(1/4)$}-pinc\'ees},
   JOURNAL = {Ann. Scuola Norm. Sup. Pisa Cl. Sci. (3)},
  FJOURNAL = {Annali della Scuola Normale Superiore di Pisa. Classe di
              Scienze. Serie III},
    VOLUME = {14},
      YEAR = {1960},
     PAGES = {161--170},
      ISSN = {0391-173X},
   MRCLASS = {53.72},
  MRNUMBER = {140054},
MRREVIEWER = {K.\ Yano},
}

@article {Klingenberg61,
    AUTHOR = {Klingenberg, Wilhelm},
     TITLE = {\"{U}ber {R}iemannsche {M}annigfaltigkeiten mit positiver
              {K}r\"ummung},
   JOURNAL = {Comment. Math. Helv.},
  FJOURNAL = {Commentarii Mathematici Helvetici},
    VOLUME = {35},
      YEAR = {1961},
     PAGES = {47--54},
      ISSN = {0010-2571,1420-8946},
   MRCLASS = {53.72},
  MRNUMBER = {139120},
MRREVIEWER = {W.\ M.\ Boothby},
       DOI = {10.1007/BF02567004},
       URL = {https://doi.org/10.1007/BF02567004},
}

@article {AN09,
    AUTHOR = {Andrews, Ben and Nguyen, Huy},
     TITLE = {Four-manifolds with 1/4-pinched flag curvatures},
   JOURNAL = {Asian J. Math.},
  FJOURNAL = {Asian Journal of Mathematics},
    VOLUME = {13},
      YEAR = {2009},
    NUMBER = {2},
     PAGES = {251--270},
      ISSN = {1093-6106},
   MRCLASS = {53C44 (35K55 53C20)},
  MRNUMBER = {2559110},
MRREVIEWER = {Fr\'{e}d\'{e}ric Robert},
       DOI = {10.4310/AJM.2009.v13.n2.a5},
       URL = {https://doi.org/10.4310/AJM.2009.v13.n2.a5},
}

@article {NWolfson07,
	AUTHOR = {Ni, Lei and Wolfson, Jon},
	TITLE = {Positive complex sectional curvature, {R}icci Flow and the differential sphere theorem},
	JOURNAL = {arXiv:0706.0332},
	FJOURNAL = {},
	VOLUME = {},
	YEAR = {2007},
	NUMBER = {},
	PAGES = {},
	ISSN = {},
	CODEN = {},
	MRCLASS = {},
	MRNUMBER = {},
	MRREVIEWER = {},
	DOI = {},
	URL = {},
}

@book {Brendle10book,
    AUTHOR = {Brendle, Simon},
     TITLE = {Ricci flow and the sphere theorem},
    SERIES = {Graduate Studies in Mathematics},
    VOLUME = {111},
 PUBLISHER = {American Mathematical Society, Providence, RI},
      YEAR = {2010},
     PAGES = {viii+176},
      ISBN = {978-0-8218-4938-5},
   MRCLASS = {53C20 (53C21 53C44)},
  MRNUMBER = {2583938},
MRREVIEWER = {Fr\'{e}d\'{e}ric Robert},
       DOI = {10.1090/gsm/111},
       URL = {https://doi.org/10.1090/gsm/111},
}

@article {BK78,
    AUTHOR = {Bourguignon, Jean-Pierre and Karcher, Hermann},
     TITLE = {Curvature operators: pinching estimates and geometric
              examples},
   JOURNAL = {Ann. Sci. \'{E}cole Norm. Sup. (4)},
  FJOURNAL = {Annales Scientifiques de l'\'{E}cole Normale Sup\'{e}rieure. Quatri\`eme
              S\'{e}rie},
    VOLUME = {11},
      YEAR = {1978},
    NUMBER = {1},
     PAGES = {71--92},
      ISSN = {0012-9593},
   MRCLASS = {53C20 (58G99)},
  MRNUMBER = {493867},
MRREVIEWER = {D. G. Ebin},
       URL = {http://www.numdam.org/item?id=ASENS_1978_4_11_1_71_0},
}

@article {Hamilton86,
    AUTHOR = {Hamilton, Richard S.},
     TITLE = {Four-manifolds with positive curvature operator},
   JOURNAL = {J. Differential Geom.},
  FJOURNAL = {Journal of Differential Geometry},
    VOLUME = {24},
      YEAR = {1986},
    NUMBER = {2},
     PAGES = {153--179},
      ISSN = {0022-040X},
   MRCLASS = {53C25 (58G30)},
  MRNUMBER = {862046},
MRREVIEWER = {Dennis M. DeTurck},
       URL = {http://projecteuclid.org/euclid.jdg/1214440433},
}

@article {Chen91,
    AUTHOR = {Chen, Haiwen},
     TITLE = {Pointwise {$\frac14$}-pinched {$4$}-manifolds},
   JOURNAL = {Ann. Global Anal. Geom.},
  FJOURNAL = {Annals of Global Analysis and Geometry},
    VOLUME = {9},
      YEAR = {1991},
    NUMBER = {2},
     PAGES = {161--176},
      ISSN = {0232-704X},
   MRCLASS = {53C20},
  MRNUMBER = {1136125},
MRREVIEWER = {Eugene Dmitrievich Rodionov},
       DOI = {10.1007/BF00776854},
       URL = {https://doi.org/10.1007/BF00776854},
}

@article {BW08,
    AUTHOR = {B{\"o}hm, Christoph and Wilking, Burkhard},
     TITLE = {Manifolds with positive curvature operators are space forms},
   JOURNAL = {Ann. of Math. (2)},
  FJOURNAL = {Annals of Mathematics. Second Series},
    VOLUME = {167},
      YEAR = {2008},
    NUMBER = {3},
     PAGES = {1079--1097},
      ISSN = {0003-486X},
     CODEN = {ANMAAH},
   MRCLASS = {53C44 (53C21)},
  MRNUMBER = {2415394 (2009h:53146)},
MRREVIEWER = {John Urbas},
       DOI = {10.4007/annals.2008.167.1079},
       URL = {http://dx.doi.org/10.4007/annals.2008.167.1079},
}

@article {BS09,
    AUTHOR = {Brendle, Simon and Schoen, Richard},
     TITLE = {Manifolds with {$1/4$}-pinched curvature are space forms},
   JOURNAL = {J. Amer. Math. Soc.},
  FJOURNAL = {Journal of the American Mathematical Society},
    VOLUME = {22},
      YEAR = {2009},
    NUMBER = {1},
     PAGES = {287--307},
      ISSN = {0894-0347},
   MRCLASS = {53C20 (53C44)},
  MRNUMBER = {2449060 (2010a:53045)},
MRREVIEWER = {Fr{\'e}d{\'e}ric Robert},
       DOI = {10.1090/S0894-0347-08-00613-9},
       URL = {http://dx.doi.org/10.1090/S0894-0347-08-00613-9},
}

@article {NW10,
    AUTHOR = {Ni, Lei and Wilking, Burkhard},
     TITLE = {Manifolds with {$1/4$}-pinched flag curvature},
   JOURNAL = {Geom. Funct. Anal.},
  FJOURNAL = {Geometric and Functional Analysis},
    VOLUME = {20},
      YEAR = {2010},
    NUMBER = {2},
     PAGES = {571--591},
      ISSN = {1016-443X},
     CODEN = {GFANFB},
   MRCLASS = {53C20 (53C44)},
  MRNUMBER = {2671287 (2012a:53051)},
MRREVIEWER = {Fr{\'e}d{\'e}ric Robert},
       DOI = {10.1007/s00039-010-0068-5},
       URL = {http://dx.doi.org/10.1007/s00039-010-0068-5},
}

@article {Hamilton97,
    AUTHOR = {Hamilton, Richard S.},
     TITLE = {Four-manifolds with positive isotropic curvature},
   JOURNAL = {Comm. Anal. Geom.},
  FJOURNAL = {Communications in Analysis and Geometry},
    VOLUME = {5},
      YEAR = {1997},
    NUMBER = {1},
     PAGES = {1--92},
      ISSN = {1019-8385},
   MRCLASS = {53C21 (53C20 57R99 58G11)},
  MRNUMBER = {1456308},
MRREVIEWER = {Ben Andrews},
       DOI = {10.4310/CAG.1997.v5.n1.a1},
       URL = {https://doi.org/10.4310/CAG.1997.v5.n1.a1},
}

@article {MW93,
    AUTHOR = {Micallef, Mario J. and Wang, McKenzie Y.},
     TITLE = {Metrics with nonnegative isotropic curvature},
   JOURNAL = {Duke Math. J.},
  FJOURNAL = {Duke Mathematical Journal},
    VOLUME = {72},
      YEAR = {1993},
    NUMBER = {3},
     PAGES = {649--672},
      ISSN = {0012-7094},
   MRCLASS = {53C20 (53C21)},
  MRNUMBER = {1253619},
MRREVIEWER = {Viktor Schroeder},
       DOI = {10.1215/S0012-7094-93-07224-9},
       URL = {https://doi.org/10.1215/S0012-7094-93-07224-9},
}

@article {MM88,
    AUTHOR = {Micallef, Mario J. and Moore, John Douglas},
     TITLE = {Minimal two-spheres and the topology of manifolds with
              positive curvature on totally isotropic two-planes},
   JOURNAL = {Ann. of Math. (2)},
  FJOURNAL = {Annals of Mathematics. Second Series},
    VOLUME = {127},
      YEAR = {1988},
    NUMBER = {1},
     PAGES = {199--227},
      ISSN = {0003-486X},
   MRCLASS = {53C42 (53C21 58E12 58E20)},
  MRNUMBER = {924677},
MRREVIEWER = {Andrea Ratto},
       DOI = {10.2307/1971420},
       URL = {https://doi.org/10.2307/1971420},
}

@article {CTZ12,
    AUTHOR = {Chen, Bing-Long and Tang, Siu-Hung and Zhu, Xi-Ping},
     TITLE = {Complete classification of compact four-manifolds with
              positive isotropic curvature},
   JOURNAL = {J. Differential Geom.},
  FJOURNAL = {Journal of Differential Geometry},
    VOLUME = {91},
      YEAR = {2012},
    NUMBER = {1},
     PAGES = {41--80},
      ISSN = {0022-040X},
   MRCLASS = {53C20 (53C21 57R55)},
  MRNUMBER = {2944961},
MRREVIEWER = {David J. Wraith},
       URL = {http://projecteuclid.org/euclid.jdg/1343133700},
}

@article {BS08,
    AUTHOR = {Brendle, Simon and Schoen, Richard M.},
     TITLE = {Classification of manifolds with weakly {$1/4$}-pinched
              curvatures},
   JOURNAL = {Acta Math.},
  FJOURNAL = {Acta Mathematica},
    VOLUME = {200},
      YEAR = {2008},
    NUMBER = {1},
     PAGES = {1--13},
      ISSN = {0001-5962},
   MRCLASS = {53C24 (53C20 53C21)},
  MRNUMBER = {2386107},
MRREVIEWER = {Julien Roth},
       DOI = {10.1007/s11511-008-0022-7},
       URL = {https://doi.org/10.1007/s11511-008-0022-7},
}

@article {Brendle08,
    AUTHOR = {Brendle, Simon},
     TITLE = {A general convergence result for the {R}icci flow in higher
              dimensions},
   JOURNAL = {Duke Math. J.},
  FJOURNAL = {Duke Mathematical Journal},
    VOLUME = {145},
      YEAR = {2008},
    NUMBER = {3},
     PAGES = {585--601},
      ISSN = {0012-7094},
   MRCLASS = {53C44},
  MRNUMBER = {2462114},
MRREVIEWER = {John Urbas},
       DOI = {10.1215/00127094-2008-059},
       URL = {https://doi.org/10.1215/00127094-2008-059},
}

@article {Hamilton82,
    AUTHOR = {Hamilton, Richard S.},
     TITLE = {Three-manifolds with positive {R}icci curvature},
   JOURNAL = {J. Differential Geom.},
  FJOURNAL = {Journal of Differential Geometry},
    VOLUME = {17},
      YEAR = {1982},
    NUMBER = {2},
     PAGES = {255--306},
      ISSN = {0022-040X},
   MRCLASS = {53C25 (35K55 58G30)},
  MRNUMBER = {664497},
MRREVIEWER = {J. L. Kazdan},
       URL = {http://projecteuclid.org/euclid.jdg/1214436922},
}

@article {Brendle19,
    AUTHOR = {Brendle, Simon},
     TITLE = {Ricci flow with surgery on manifolds with positive isotropic
              curvature},
   JOURNAL = {Ann. of Math. (2)},
  FJOURNAL = {Annals of Mathematics. Second Series},
    VOLUME = {190},
      YEAR = {2019},
    NUMBER = {2},
     PAGES = {465--559},
      ISSN = {0003-486X},
   MRCLASS = {53C44},
  MRNUMBER = {3997128},
       DOI = {10.4007/annals.2019.190.2.2},
       URL = {https://doi.org/10.4007/annals.2019.190.2.2},
}

@book {SYbook,
    AUTHOR = {Schoen, R. and Yau, S.-T.},
     TITLE = {Lectures on differential geometry},
    SERIES = {Conference Proceedings and Lecture Notes in Geometry and
              Topology, I},
 PUBLISHER = {International Press, Cambridge, MA},
      YEAR = {1994},
     PAGES = {v+235},
      ISBN = {1-57146-012-8},
   MRCLASS = {53-01 (53-02 53C21 58G30)},
  MRNUMBER = {1333601},
MRREVIEWER = {Man Chun Leung},
}

@article {PW21,
    AUTHOR = {Petersen, Peter and Wink, Matthias},
     TITLE = {New curvature conditions for the {B}ochner {T}echnique},
   JOURNAL = {Invent. Math.},
  FJOURNAL = {Inventiones Mathematicae},
    VOLUME = {224},
      YEAR = {2021},
    NUMBER = {1},
     PAGES = {33--54},
      ISSN = {0020-9910},
   MRCLASS = {53B20 (53C20 53C21 53C23 58A14)},
  MRNUMBER = {4228500},
       DOI = {10.1007/s00222-020-01003-3},
       URL = {https://doi-org.libaccess.lib.mcmaster.ca/10.1007/s00222-020-01003-3},
}

@article {CGT23,
    AUTHOR = {Cao, Xiaodong and Gursky, Matthew and Tran, Hung},
     TITLE = {Curvature of the second kind and a conjecture of {N}ishikawa},
   JOURNAL = {Comment. Math. Helv.},
  FJOURNAL = {Commentarii Mathematici Helvetici. A Journal of the Swiss
              Mathematical Society},
    VOLUME = {98},
      YEAR = {2023},
    NUMBER = {1},
     PAGES = {195--216},
      ISSN = {0010-2571,1420-8946},
   MRCLASS = {53C21 (53C24)},
  MRNUMBER = {4592855},
       DOI = {10.4171/cmh/545},
       URL = {https://doi.org/10.4171/cmh/545},
}

\end{document}